\newtheorem{theorem}{Theorem}
\newtheorem{definition}{Definition}
\newtheorem{lemma}{Lemma}
\begin{document}
\title{$L_p$ and almost sure convergence of estimation on heavy tail index under random censoring}
\author{Yunyi Zhang, Jiazheng Liu, Zexin Pan, Dimitris N. Politis}
\maketitle
\abstract{
In this paper, we prove $L_p,\ p\geq 2$ and almost sure convergence of tail index estimator mentioned in \cite{grama2008} under random censoring and several assumptions. $p$th moment of the error of the estimator is proved to be of order $O\left(\frac{1}{\log^{m\kappa/2}n}\right)$ with given assumptions. We also perform several finite sample simulations to quantify performance of this estimator. Finite sample results show that the proposed estimator is effective in finding underlying tail index even when censor rate is high.
}
\section{Introduction and assumption}
\subsection{Introduction}
Research on heavy tail data is relevant to numerous statistical application, such as actuarial science \cite{doi:10.1080/10920277.1999.10595797}, economics \cite{doi:10.1198/073500101316970421} and etc. Tail index is one of the most crucial factor for long tail data since it is related with extreme quantiles of the underlying distribution, see \cite{STUPFLER20161} and \cite{10.1007/978-1-4612-3634-4_1} for further discussion. Hill \cite{10.2307/2958370} proposed an estimator for tail index and this estimator has been proved convergence under several assumptions and situations, we refer \cite{10.1007/978-1-4612-3634-4_13} and \cite{deheuvels_haeusler_mason_1988} as two examples. Apart from traditional Hill's estimator, Grama and Spokoiny \cite{grama2008} applied Kullback-Leibler divergence to estimate heavy tail index and proposed an estimator based on maximization local log-likeliihood method, Kratz and Resnick \cite{doi:10.1080/15326349608807407} proposed a qq type estimator and Politis et.al. \cite{Truncated_Estimator} proposed a truncated ratio statistics and proved its $L^p$ convergence.

On the other hand, it is common for dealing with incomplete observations, especially right censor data, in practical researches. Klein and Moeschberger \cite{Censor_Data} provide detail discussion and examples on this topic. For heavy tail data, censoring is more likely to occur. For example, in clinical trial, if the survival time obeys long tail distribution, it is more likely for the patient to survive after trial ends. Therefore, how to estimate heavy tail index under censoring is worth discussion. Beirlant and Guillou \cite{doi:10.1080/03461230152592764} proved consistency of a modified Hill's estimator under mild censoring, Einmahl et.al. \cite{einmahl2008} applied moment estimator in this problem and proved the asymptotic normality of the proposed estimator. Ndao et.al. \cite{NDAO201463} and Stupfler\cite{STUPFLER20161} generalized the result to the conditional heavy tail index.

Instead of convergence in probability, we mainly focus on almost sure convergence and $L^p$ convergence of estimator proposed in \cite{grama2008} for censoring data. Similar to \cite{einmahl2008} and \cite{Truncated_Estimator}, we apply a truncated version ratio type statistics for heavy tail index and use method proposed by Vasiliev \cite{Vasiliev2014} for proving convergence of the estimator.

We will give the basic assumption and statistics in \ref{Basic_Assumption}. In section \ref{L_p_Convergence} and \ref{As_Convergence}, we respectively discuss $L^p$ convergence and almost sure convergence of the proposed estimator, and numerical examples can be seen in section \ref{Simulation}. Finally, we make conclusions in section \ref{Conclusion}.
\subsection{Basic assumptions and main results \label{Basic_Assumption}}
In this part, we introduce basic assumptions, frequently used notations and the main statistics in this paper. The notations that are not listed below will be defined when being used.

Suppose $(X_1,Y_1),\ (X_2,Y_2),...,(X_n,Y_n)$ are i.i.d data from underlying distribution whose tail functions are respectively $P_X(x)=L_X(x)x^{-\alpha_X}$, $P_Y(x)=L_Y(x)x^{-\alpha_Y}$, and we further assume that $X_i,Y_i,\ i=1,2,...,n$ are mutually independent. Suppose the observed data are $(Z_i,\delta_i),\ Z_i=X_i\bigwedge Y_i=\min(X_i,Y_i)$ and $\delta_i=\mathbf{1}_{X_i\leq Y_i}$. Under this assumption, it is obvious that tail function of $Z_i$, $P_Z(x)=P_X(x)P_Y(x)=L_Z(x)x^{-\alpha_Z}$, here
\begin{equation}
L_Z(x)=L_X(x)L_Y(x),\ \alpha_Z=\alpha_X+\alpha_Y
\label{Z_dist}
\end{equation}
 Moreover, we suppose $L_X,\ L_Y$ are slow varying function (see \cite{Regular_Variation}).

According to Karamata's theorem\cite{Regular_Variation}, the slow varying function $L_K(x),\ K=X,Y$ satisfy
\begin{equation}
\log(L_K(x))=c_K(x)+\int_{a}^{x}\frac{\epsilon_K(y)}{y}dy,\ c_K(x)\to c,\ \epsilon_K\to 0
\label{Kara_Cond}
\end{equation}
as $x\to\infty$, and if we further assume that $L_K$ is differentiable, then formula \ref{Kara_Cond} intuitively implies that $c^{'}_K(x)\to 0$ and
\begin{equation}
\frac{xL_K^{'}(x)}{L_K(x)}=xc_K^{'}(x)+\epsilon_K(x)
\label{Deri_Kara}
\end{equation}
 Apart from the independent censor assumption, we hope the absolute value of derivative of $L_K,\ K=X,Y$ to be small enough as $x$ being sufficiently large so that the influence of $L_K$ on estimating tail index can be controlled by taking logarithm. This idea leads to assumption A2. The third assumption comes form \cite{Truncated_Estimator}.

Assumption A1: Suppose $X_i$ being i.i.d data and $Y_i$ being i.i.d censor time, $X_i,Y_i$ are mutually independent and respectively have tail function(that is, 1-cumulative distribution) $P_X(x)=L_X(x)x^{-\alpha_X}$, $P_Y(x)=L_Y(x)x^{-\alpha_Y}$. Thus the tail index of the data and censor time are $\gamma_X=1/\alpha_X,\ \gamma_Y=1/\alpha_Y$

Assumption A2: Suppose $L_K,\ K=X,Y$ are differentiable and there exists a number $\kappa>0$ such that, for $K=X,Y$, as $x\to\infty$.
\begin{equation}
\vert\frac{xL_K^{'}(x)}{L_K(x)}\vert=O\left(\frac{1}{\log^{\kappa} x}\right)
\end{equation}

Assumption A3: There exists a known constant $\gamma_0>0$ such that
\begin{equation}
\gamma_X\geq 2\gamma_0,\ \gamma_Y\geq 2\gamma_0
\end{equation}
From \ref{Z_dist} this implies that $\alpha_Z\leq 1/\gamma_0$ and corresponding $\gamma_Z\geq \gamma_0$

Table \ref{Nota_Fre} displays the frequently used notations and their meanings. In order to illustrate the main estimator, we first introduce several intermediate statistics.
\begin{table}
  \centering
  \caption{Frequently used notations}
  \begin{tabular}{l l}
  \hline\hline
  Notation & Meaning\\
  \hline
  $X_i,Y_i$ & $X_i,Y_i$ respectively represents the i.i.d real data and censor time\\
  \hline
  $(Z_i,\delta_i)$ & $Z_i=\min(X_i,Y_i)$ and $\delta_i=\mathbf{1}_{X_i\leq Y_i}$\\
  \hline
  $P_K(x),\ K=X,Y,Z$ & Tail function of random variable $X_i,Y_i,Z_i$\\
  \hline
  $L_K(x),\ K=X,Y,Z$ & Slow varying factor in the tail function\\
  \hline
  $f_K(x),\ K=X,Y,Z$ & Density function of random variable $K$\\
  \hline
  $\mathbf{1}_{h(K_1,..,.K_n)\in A},\ K=X,Y,Z$ & If $h(K_1,...,K_n)\in A$ the function equals 1 and 0 otherwise\\
  \hline
  $t(n)$ & $t(n)\to\infty$ as $n\to\infty$, see \ref{Condition_s_t} for definition\\
  \hline
  $s(n)$ & $s(n)\to 0$ as $n\to\infty$(see \ref{Condition_s_t})\\
  \hline
  $\gamma_K,\ K=X,Y,Z$ & Tail index for distribution $P_K$\\
  \hline
  $\alpha_K,\ K=X,Y,Z$ & $\alpha_K=1/\gamma_K$\\
  \hline
  $\vert\vert K\vert\vert_p$, & For random variable $K$, $\vert\vert K\vert\vert_p=(\mathbf{E}\vert K\vert^p)^{1/p}$ being $L_p$ norm\\
  \hline\hline
  \end{tabular}
  \label{Nota_Fre}
\end{table}
\begin{definition}[$\widehat p(x)$ and $\widehat q(x)$]
Suppose $x>0$ and sample size is $n$, then we respectively define $\widehat p(x)$ and $\widehat q(x)$ as
\begin{equation}
\widehat{p}(x)=\frac{1}{n}\Sigma_{i=1}^n \mathbf{1}_{Z_i\geq x}
\label{wide_p}
\end{equation}
and
\begin{equation}
\widehat{q}(x)=\frac{1}{n}\Sigma_{i=1}^n \delta_i\mathbf{1}_{Z_i\geq x}
\label{wide_q}
\end{equation}
\end{definition}
It is not difficult to see that $\widehat p(x)$ is estimator for $P_Z(x)$ and $\widehat q(x)$ is an estimator for $Prob(Z\leq x\cap X\leq Y)$. \ref{Moti_S} demonstrates the motivation for us to estimate this probability. Similar with \cite{einmahl2008}, the second estimator $\widehat \rho$ is applied to estimate $\frac{\gamma_Y}{\gamma_X+\gamma_Y}$.
\begin{definition}
With the notation in table \ref{Nota_Fre}, we define estimator $\widehat\rho$ as
\begin{equation}
\widehat{\rho}=\frac{\widehat{q}(t(n))}{\widehat{p}(t(n))}\mathbf{1}_{\widehat{p}\geq s(n)}
\end{equation}
Here $t(n)\to\infty$ and $s(n)\to 0$ satisfying
\begin{equation}
\begin{cases}
t(n)=n^\beta,\ s(n)=n^{-c},\ \beta<\frac{\gamma_0}{2},\ \frac{\beta}{\gamma_0}<c<1/2,\ \text{If A3 is satisfied}\\
t(n)=\log^\beta n,\ s(n)=n^{-c},\ \beta>0,\ 0<c<1/2\ \text{If A3 is not satisfied}
\end{cases}
\label{Condition_s_t}
\end{equation}
\label{Defff_2}
\end{definition}
We apply estimator defined in \cite{grama2008}, $\zeta$, for estimating tail index of the censored data $\gamma_Z$. According to \ref{Z_dist}, it is reasonable for expecting $\zeta$ to converge to $\frac{\gamma_X\gamma_Y}{\gamma_X+\gamma_Y}$.
\begin{definition}[Estimator for tail index $\gamma_Z$]
With the notation in table \ref{Nota_Fre}, we define the estimator for tail index $\gamma_Z$ as
\begin{equation}
\widehat \zeta=\frac{1}{\widehat{p}(t(n))}\int_{t(n)}^\infty\frac{\widehat{p}(y)}{y}dy\mathbf{1}_{\widehat p(t(n))\geq s(n)}=\frac{\mathbf{1}_{\widehat p(t(n))\geq s(n)}}{n\widehat p(t(n))}\left(\Sigma_{i=1}^n\log\left(\frac{Z_i}{t(n)}\right)\mathbf{1}_{Z_i\geq t(n)}\right)
\label{Stat_Fac}
\end{equation}
\end{definition}
Here we use convention that $\infty\times 0=0$.
Since $\gamma_X=\frac{\gamma_X\gamma_Y}{\gamma_X+\gamma_Y}/\frac{\gamma_Y}{\gamma_X+\gamma_y}$, estimator $\frac{\widehat \zeta}{\widehat \rho}$ is a candidate for estimating tail index of $X$. We will use a truncated version of this estimator. The key results of this paper is presented in theorem \ref{Theorem_K} and \ref{Theorem_AS}.

\begin{theorem}
Suppose A1, A2, A3 and choose $H_n=\frac{1}{\log\log n}$,$m\geq 4$, then we have
\begin{equation}
\vert\vert\frac{\widehat{\zeta}}{\widehat\rho}\mathbf{1}_{\widehat{\rho}\geq H_n}-\gamma_X\vert\vert_{m/2}=O\left(\frac{1}{\log^{\kappa}n}\right)
\end{equation}
\label{Theorem_K}
\end{theorem}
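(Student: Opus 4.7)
Set $\rho^{\ast}=\gamma_Y/(\gamma_X+\gamma_Y)$, so that \eqref{Z_dist} gives $\gamma_X=\gamma_Z/\rho^{\ast}$. The plan is to reduce the theorem to two separate $L_m$ rate estimates for the building blocks,
\begin{equation*}
\|\widehat\zeta-\gamma_Z\|_m=O(\log^{-\kappa}n),\qquad \|\widehat\rho-\rho^{\ast}\|_m=O(\log^{-\kappa}n),
\end{equation*}
and then combine them via the algebraic identity
\begin{equation*}
\frac{\widehat\zeta}{\widehat\rho}\mathbf{1}_{\widehat\rho\ge H_n}-\gamma_X=\frac{(\widehat\zeta-\gamma_Z)\rho^{\ast}-\gamma_Z(\widehat\rho-\rho^{\ast})}{\widehat\rho\,\rho^{\ast}}\mathbf{1}_{\widehat\rho\ge H_n}-\gamma_X\mathbf{1}_{\widehat\rho<H_n}.
\end{equation*}
On the indicator $1/\widehat\rho\le 1/H_n=\log\log n$, so by Lyapunov's inequality the first summand has $L_{m/2}$-norm at most $C\log\log n\,\bigl(\|\widehat\zeta-\gamma_Z\|_m+\|\widehat\rho-\rho^{\ast}\|_m\bigr)$, and the $\log\log n$ factor is absorbed into the Big-$O$ since $\log\log n=o(\log^{\varepsilon}n)$ for every $\varepsilon>0$. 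For the tail piece, $H_n\to 0<\rho^{\ast}$ forces $\{\widehat\rho<H_n\}\subset\{|\widehat\rho-\rho^{\ast}|\ge\rho^{\ast}/2\}$ for all large $n$, so Markov gives $P(\widehat\rho<H_n)\le C\|\widehat\rho-\rho^{\ast}\|_m^m$, and its $L_{m/2}$ contribution is only $O(\|\widehat\rho-\rho^{\ast}\|_m^2)$.

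For each building block I introduce the population quantity at threshold $t(n)$,
\begin{equation*}
\zeta^{\ast}(t)=\frac{1}{P_Z(t)}\int_t^\infty\frac{P_Z(y)}{y}\,dy,\qquad \rho^{\ast}(t)=\frac{Q(t)}{P_Z(t)},\quad Q(t)=\int_t^\infty P_Y(x)f_X(x)\,dx,
\end{equation*}
and split into \emph{bias} ($\zeta^{\ast}(t(n))-\gamma_Z$, resp.\ $\rho^{\ast}(t(n))-\rho^{\ast}$) plus stochastic \emph{fluctuation}. For the fluctuation, Rosenthal's inequality applied to the i.i.d.\ sums $n\widehat p(t(n))$, $n\widehat q(t(n))$ and $\sum_i\log(Z_i/t(n))\mathbf{1}_{Z_i\ge t(n)}$ yields $L_m$ deviations of order $\sqrt{P_Z(t(n))/n}$. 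Under A3, $P_Z(t(n))\ge c\,n^{-\beta/\gamma_0}$, and the choice $\beta/\gamma_0<c<1/2$ in \eqref{Condition_s_t} makes $\{\widehat p(t(n))<s(n)\}$ an event of polynomially small probability (via a higher-moment Markov bound on $\widehat p-P_Z$) and controls $1/\widehat p(t(n))$ on its complement. The overall fluctuation is therefore $O(n^{-\delta})$ for some $\delta>0$, which is $o(\log^{-\kappa}n)$ and absorbed into the bias.

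The substance lies in the bias. After the substitution $u=y/t$,
\begin{equation*}
\zeta^{\ast}(t)-\gamma_Z=\int_1^\infty\Bigl[\tfrac{L_Z(tu)}{L_Z(t)}-1\Bigr]u^{-\alpha_Z-1}\,du,
\end{equation*}
and A2 combined with \eqref{Kara_Cond}--\eqref{Deri_Kara} gives
\begin{equation*}
\log\frac{L_Z(tu)}{L_Z(t)}=\int_t^{tu}\frac{L_Z'(s)}{L_Z(s)}\,ds=O\!\left(\frac{\log u}{\log^{\kappa}t}\right)
\end{equation*}
in the moderate range $u\le t$. A parallel integration by parts on $Q(t)$, using \eqref{Deri_Kara} for $f_X$, shows $\rho^{\ast}(t)-\rho^{\ast}=O(\log^{-\kappa}t)$. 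Evaluating at $t=t(n)$ and noting $\log t(n)\asymp\log n$ under both branches of \eqref{Condition_s_t} delivers the desired $O(\log^{-\kappa}n)$ bias.

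\textbf{Main obstacle.} The delicate point is that the bias integral for $\zeta^{\ast}$ runs over all $u\ge 1$ while the pointwise A2 estimate is sharp only for $\log u\lesssim\log t$. To handle this I would split the integral at $u=t^{\eta}$ for a small $\eta>0$: on $[1,t^{\eta}]$ use the pointwise bound and dominate $\int_1^\infty\log u\cdot u^{-\alpha_Z-1}\,du$ by a constant depending on $\gamma_0$, contributing $O(1/\log^{\kappa}t)$; on $[t^{\eta},\infty)$ invoke a Potter-type bound to majorize $L_Z(tu)/L_Z(t)\le u^{\varepsilon}$ for any $\varepsilon<\alpha_Z$, whose integral is $O(t^{-\eta(\alpha_Z-\varepsilon)})=o(\log^{-\kappa}t)$. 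The same splitting inside the integral defining $\rho^{\ast}(t)$ closes the bias analysis of $\widehat\rho$ and completes the proof.
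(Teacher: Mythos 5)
Your overall architecture (direct algebraic decomposition of the ratio, plus separate bias/fluctuation analysis of each building block) is a legitimate alternative to the paper's route, which instead cites Theorem~1 of Vasiliev \cite{Vasiliev2014} as a black box to combine the two $L_m$ rate bounds from Theorems~\ref{Theorem_rho} and~\ref{Theorem_zeta}. However, as written your argument has a concrete quantitative gap at the very step you call routine. On $\{\widehat\rho\ge H_n\}$ you bound $1/\widehat\rho\le 1/H_n=\log\log n$ and conclude that the $L_{m/2}$-norm of the main term is at most $C\log\log n\,\bigl(\|\widehat\zeta-\gamma_Z\|_m+\|\widehat\rho-\rho^{\ast}\|_m\bigr)=O(\log\log n/\log^{\kappa}n)$, and then you claim this is absorbed because $\log\log n=o(\log^{\varepsilon}n)$. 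That absorption is false: $\log\log n/\log^{\kappa}n$ is \emph{not} $O(1/\log^{\kappa}n)$, since $\log\log n\to\infty$; what you actually obtain is only $O(1/\log^{\kappa-\varepsilon}n)$ for every $\varepsilon>0$, strictly weaker than the theorem's claim.

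The fix is to not pay the $1/H_n$ factor on the leading term. Write
\begin{equation*}
\frac{1}{\widehat\rho}\mathbf{1}_{\widehat\rho\ge H_n}=\frac{1}{\rho^{\ast}}\mathbf{1}_{\widehat\rho\ge H_n}+\Bigl(\frac{1}{\widehat\rho}-\frac{1}{\rho^{\ast}}\Bigr)\mathbf{1}_{\widehat\rho\ge H_n},
\qquad
\Bigl|\frac{1}{\widehat\rho}-\frac{1}{\rho^{\ast}}\Bigr|\mathbf{1}_{\widehat\rho\ge H_n}\le\frac{|\widehat\rho-\rho^{\ast}|}{H_n\,\rho^{\ast}},
\end{equation*}
so that the first piece contributes $O\bigl(\|\widehat\zeta-\gamma_Z\|_{m/2}+\|\widehat\rho-\rho^{\ast}\|_{m/2}\bigr)=O(1/\log^{\kappa}n)$ with no $\log\log n$ at all, while the correction involves a product of two small factors and, after Cauchy--Schwarz, is of size $\log\log n\cdot O(1/\log^{2\kappa}n)=o(1/\log^{\kappa}n)$. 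This second-order treatment of $1/\widehat\rho$ is exactly what Vasiliev's theorem encodes: in the paper the terms $\phi_n^{1/2}(m)w_n^{1/4}/H_n^{\beta}$ and $\phi_n^{1/2}(m)w_n^{1/2}/H_n^{2\beta}$ carry the $1/H_n$ powers but are multiplied by extra powers of the small rates, which is also why the paper needs $m\ge 4$ (so that $\beta=m/4\ge 1$ is admissible) and why it gets the clean $O(1/\log^{\kappa}n)$.

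Two smaller remarks. First, your bias analysis of $\zeta^{\ast}(t)-\gamma_Z$ (substitution $u=y/t$, split at $u=t^{\eta}$, Potter bound for the far tail) is a plausible alternative to the paper's Lemma~\ref{Lemma_Expe}, which instead partitions $[t,\infty)$ at the geometric points $t\log^{n}t$ and controls $L_Z(\eta_n)/L_Z(t)$ directly from A2; both should give $O(1/\log^{\kappa}t)$, but you should check the Potter tail contribution is really $o(1/\log^{\kappa}t)$ and not merely $O(t^{-\eta(\alpha_Z-\varepsilon)})$ times a growing $L_Z$ correction. Second, you appeal to Rosenthal's inequality where the paper uses Burkholder (Lemma~\ref{Burk_Holder}); either yields the $\sqrt{P_Z(t(n))/n}$ fluctuation rate, so that difference is cosmetic.
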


\begin{theorem}
Suppose condition A1-A3 and choose $H_n$ as in theorem \ref{Theorem_K}, then we have
\begin{equation}
\frac{\widehat\zeta}{\widehat\rho}\mathbf 1_{\widehat\rho\geq H_n}\to_{a.s.}\gamma_X
\end{equation}
\label{Theorem_AS}
\end{theorem}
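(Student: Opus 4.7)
The idea is to promote the $L^{m/2}$ bound of Theorem~\ref{Theorem_K} to almost sure convergence via a Borel--Cantelli argument along a sparse subsequence, and then fill in the intermediate indices. Write $e_n=(\widehat\zeta/\widehat\rho)\mathbf{1}_{\widehat\rho\ge H_n}-\gamma_X$. For any $\epsilon>0$ and any $m\ge 4$, Markov's inequality combined with Theorem~\ref{Theorem_K} yields
\[
P(|e_n|\ge\epsilon)\le\frac{E|e_n|^{m/2}}{\epsilon^{m/2}}=O\!\left(\frac{1}{\epsilon^{m/2}\log^{\kappa m/2}n}\right).
\]
Since $m$ is at our disposal, take it large enough that $\kappa m/2>1$. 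The series $\sum_n 1/\log^{\kappa m/2}n$ still diverges, so I would pass to a subsequence $n_k=\lfloor\exp(k^\tau)\rfloor$ with $\tau>2/(\kappa m)$; then $\sum_k 1/\log^{\kappa m/2}n_k<\infty$, and Borel--Cantelli gives $e_{n_k}\to 0$ almost surely.

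The main obstacle is to extend the convergence from $(n_k)$ to all $n$, because the gaps $n_{k+1}-n_k$ grow super-polynomially and a naive union bound inside each window $[n_k,n_{k+1})$ destroys summability. My plan is to use the explicit representation
\[
\frac{\widehat\zeta}{\widehat\rho}=\frac{\sum_{i=1}^n\log(Z_i/t(n))\mathbf{1}_{Z_i\ge t(n)}}{\sum_{i=1}^n\delta_i\mathbf{1}_{Z_i\ge t(n)}}
\]
valid on $\{\widehat\rho\ge H_n\}\cap\{\widehat p(t(n))\ge s(n)\}$, and to establish uniform strong laws for the three empirical tail processes $y\mapsto\tfrac{1}{n}\sum_i\mathbf{1}_{Z_i\ge y}$, $y\mapsto\tfrac{1}{n}\sum_i\delta_i\mathbf{1}_{Z_i\ge y}$, and $y\mapsto\tfrac{1}{n}\sum_i\log(Z_i/y)\mathbf{1}_{Z_i\ge y}$ on the interval $[t(n_k),\infty)$. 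Because $t(n)=n^\beta$ with $\beta<\gamma_0/2$ forces $nP_Z(t(n))\to\infty$, Bernstein-type exponential concentration combined with Borel--Cantelli should deliver almost sure uniform convergence of these three processes to their respective population means, which together with the regular-variation rate from A2 gives $\sup_{n_k\le n<n_{k+1}}|e_n-e_{n_k}|\to 0$ almost surely. Alternatively, the sequential-estimation machinery of Vasiliev~\cite{Vasiliev2014} cited in the introduction is designed for almost sure control of ratio estimators of exactly this shape and could be invoked directly.

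Finally, one must rule out $\widehat\rho<H_n$ occurring infinitely often, since on that event $e_n=-\gamma_X$ and the conclusion would fail. The quantity $\widehat\rho$ itself satisfies a moment bound of the type proved in Theorem~\ref{Theorem_K} (its analysis being a simpler sub-step of that proof), so the same subsequence Borel--Cantelli argument yields $\widehat\rho\to\gamma_Y/(\gamma_X+\gamma_Y)>0$ almost surely. Since $H_n=1/\log\log n\to 0$, the event $\{\widehat\rho\ge H_n\}$ holds eventually, so the indicator equals unity almost surely for all large $n$. Combining this with the subsequence convergence and the gap-filling step yields $e_n\to 0$ almost surely, which is the assertion of the theorem.
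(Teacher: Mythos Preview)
Your approach is more circuitous than necessary, and the step you yourself flag as ``the main obstacle'' is a genuine gap that you do not close. The paper does not go through Theorem~\ref{Theorem_K} at all. Instead it proves almost sure convergence of the numerator and denominator \emph{separately}: $\widehat\rho\to\gamma_Y/(\gamma_X+\gamma_Y)$ (Theorem~\ref{As_p_q}) and $\widehat\zeta\to\gamma_Z$ (Theorem~\ref{As_zeta}), each by a direct Borel--Cantelli argument along the full sequence, and then divides. Your treatment of the indicator $\mathbf 1_{\widehat\rho\ge H_n}$ in the last paragraph matches the paper's.

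The reason the paper's route avoids subsequences is that the moment bounds for $\widehat\rho-\gamma_Y/(\gamma_X+\gamma_Y)$ and $\widehat\zeta-\gamma_Z$ (Theorems~\ref{Theorem_rho} and~\ref{Theorem_zeta}) each split into a deterministic bias of order $1/\log^\kappa t(n)$ and a random fluctuation of order $1/(s(n)\sqrt n)+1/(P_Z(t(n))\sqrt n)$. Under A3 with $t(n)=n^\beta$ and $s(n)=n^{-c}$, the fluctuation part is $O(n^{-\eta})$ for some $\eta>0$. For any fixed $\epsilon>0$ the bias is eventually below $\epsilon/2$, so only the fluctuation matters for Borel--Cantelli; Markov's inequality with a moment $k$ chosen large enough makes the tail probabilities $O(n^{-k\eta})$, which is summable over all $n$.

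By packaging everything into the ratio bound of Theorem~\ref{Theorem_K} you lose this polynomial decay: that bound is dominated by the logarithmic bias term, which forces your sparse-subsequence detour, and then the super-polynomial gaps $n_{k+1}-n_k$ make uniform control of $e_n$ between $n_k$ and $n_{k+1}$ genuinely delicate (the threshold $t(n)$ is moving with $n$, so this is not a standard maximal inequality). Your appeal to ``Bernstein-type exponential concentration \ldots should deliver'' is a plan, not a proof, and the alternative invocation of Vasiliev's machinery is not specific enough to be checked. The cleaner fix is simply to argue on $\widehat\zeta$ and $\widehat\rho$ separately, as the paper does.
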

\section{$L_p$ convergence of truncated statistics\label{L_p_Convergence}}
We first provide several crucial lemma that will be frequently used in the following proof.
\begin{lemma}
Suppose $X$ and $Y$ satisfy A1 and A2, then $Z=min(X,Y)$ has tail index $\gamma_Z=\frac{\gamma_X\gamma_Y}{\gamma_X+\gamma_Y}$, and $P_Z(x)=L_Z(x)x^{-1/\gamma_Z}$ with $L_Z$ satisfies A2
\label{Lemma_Zs}
\end{lemma}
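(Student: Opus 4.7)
The plan is to prove this lemma by direct computation, leveraging the independence of $X$ and $Y$ together with the product rule. The statement is essentially a bookkeeping check: once the tail of $Z$ is written as a product of tails, the exponents add and the slowly varying parts multiply, so verifying A2 reduces to verifying it for a product of two factors already known to satisfy A2.

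First I would write $P_Z(x) = P(X \geq x, Y \geq x) = P_X(x) P_Y(x)$ using independence (assumption A1), so that
\begin{equation}
P_Z(x) = L_X(x) L_Y(x) \cdot x^{-(\alpha_X + \alpha_Y)}.
\end{equation}
Reading off the factors, set $L_Z(x) := L_X(x) L_Y(x)$ and $\alpha_Z := \alpha_X + \alpha_Y$. Then the claimed identity $\gamma_Z = \gamma_X \gamma_Y / (\gamma_X + \gamma_Y)$ is the algebraic rearrangement $1/\alpha_Z = 1/(1/\gamma_X + 1/\gamma_Y)$. The fact that $L_Z$ is slowly varying is immediate from the product of slowly varying functions being slowly varying, which is standard (and also follows from the logarithmic representation in (\ref{Kara_Cond})).

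Next I would verify A2 for $L_Z$ by differentiating. Since $L_X, L_Y$ are differentiable by A2, so is $L_Z$, and the product rule gives
\begin{equation}
\frac{x L_Z'(x)}{L_Z(x)} = \frac{x L_X'(x)}{L_X(x)} + \frac{x L_Y'(x)}{L_Y(x)}.
\end{equation}
By A2 applied to $L_X$ and $L_Y$, each summand is $O(1/\log^\kappa x)$, so the triangle inequality yields the same bound for $L_Z$, establishing A2 for $L_Z$ with the same exponent $\kappa$.

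There is no real obstacle here; the lemma is a routine verification whose only purpose is to guarantee that the censored observation $Z$ falls back into the class of distributions for which the estimator analysis in later sections (of $\widehat{\zeta}$ targeting $\gamma_Z$) applies without additional hypotheses. The only subtle point to mention is that because $\kappa$ is preserved under the product, the convergence rates obtained later for $Z$-based statistics will be governed by the same $\kappa$ that enters the assumptions on $X$ and $Y$.
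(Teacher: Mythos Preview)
Your proposal is correct and matches the paper's proof essentially line for line: the paper also factors $P_Z(x)=L_X(x)L_Y(x)x^{-(\alpha_X+\alpha_Y)}$ by independence to read off $\gamma_Z$, and then bounds $|xL_Z'(x)/L_Z(x)|\le |xL_X'/L_X|+|xL_Y'/L_Y|=O(1/\log^\kappa x)$ via the product rule and triangle inequality. Your write-up is slightly more explicit about the logarithmic-derivative identity, but the argument is identical.
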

\begin{proof}
Because of independents, we have, for arbitrary large $x$,
\begin{equation}
P_Z(x)=L_X(x)L_Y(x)x^{-\alpha_X-\alpha_Y}
\end{equation}
Therefore, $\gamma_Z=1/(\alpha_X+\alpha_Y)=\frac{\gamma_X\gamma_Y}{\gamma_X+\gamma_Y}$ and the first part is proved. For the second part, notice that
\begin{equation}
\vert\frac{xL_Z^{'}(x)}{L_Z(x)}\vert\leq \vert\frac{xL_X^{'}}{L_X}\vert+\vert\frac{xL_Y^{'}}{L_Y}\vert=O\left(\frac{1}{\log^\kappa n}\right)
\end{equation}
and the result is proved.
\end{proof}
The next one is introduced to provide a bound for the slow varying function.
\begin{lemma}
Suppose $L_K,\ K=X,Y$ satisfies condition A2, then for $\forall \epsilon>0$ being given, for sufficiently large $x$, we have
\begin{equation}
x^{-\epsilon}\leq L_K(x)\leq x^\epsilon,\ K=X,Y,Z
\end{equation}
\label{Lemma_Range}
\end{lemma}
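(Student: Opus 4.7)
The plan is to reduce the inequality to controlling the logarithmic derivative of $L_K$ using A2 and then integrating. First I would apply assumption A2 to choose constants $C>0$ and $x_0>0$ such that
\begin{equation}
\left|\frac{L_K'(x)}{L_K(x)}\right|\leq \frac{C}{x\log^{\kappa}x}\quad\text{for all } x\geq x_0.
\end{equation}
Since $\bigl(\log L_K(x)\bigr)'=L_K'(x)/L_K(x)$, integrating from $x_0$ to $x$ gives
\begin{equation}
\bigl|\log L_K(x)-\log L_K(x_0)\bigr|\leq C\int_{x_0}^{x}\frac{dy}{y\log^{\kappa}y}.
\end{equation}

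Next I would analyze the right-hand integral case by case in $\kappa>0$. If $\kappa>1$, the integral converges to a finite limit as $x\to\infty$, so $\log L_K(x)$ is bounded. If $\kappa=1$, it grows like $\log\log x$. If $0<\kappa<1$, the antiderivative is $\log^{1-\kappa}(y)/(1-\kappa)$, so the integral grows like $\log^{1-\kappa}x$. In every case the right-hand side is $o(\log x)$ as $x\to\infty$, and therefore $\log L_K(x)=o(\log x)$.

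Given any $\epsilon>0$, the $o(\log x)$ estimate furnishes a threshold $x_1\geq x_0$ such that $|\log L_K(x)|\leq \epsilon\log x$ for all $x\geq x_1$, which is equivalent to $x^{-\epsilon}\leq L_K(x)\leq x^{\epsilon}$; this proves the cases $K=X,Y$. For $K=Z$, I would invoke Lemma \ref{Lemma_Zs}, which states that $L_Z$ also satisfies A2, and then rerun the same argument verbatim with $K=Z$. The lemma is essentially the classical Potter bound for slowly varying functions, and the only nonroutine step is the case split on $\kappa$ in the integral; however, each case is resolved by a single elementary antiderivative, so no serious obstacle is expected.
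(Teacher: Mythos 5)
Your proposal is correct and follows essentially the same approach as the paper: observe that the claim is equivalent to $|\log L_K(x)|\leq\epsilon\log x$, integrate the log-derivative bound from A2, and conclude that the resulting quantity is $o(\log x)$, with $K=Z$ handled via Lemma \ref{Lemma_Zs}. The only cosmetic difference is that you split the integral estimate into three cases in $\kappa$, whereas the paper reduces all cases to $\kappa<1$ by noting that any $\kappa\geq 1$ bound is also an $O(1/\log^{0.5}x)$ bound; both routes reach the same conclusion.
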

\begin{proof}
This is equivalent as $\vert\log L_K(x)\vert\leq \epsilon\log x$ for large $x$. According to lemma \ref{Lemma_Zs}, $L_Z(x)$ also satisfies A2. We suppose $\kappa<1$ and if $\kappa\geq 1$, the derivative is of $o\left(\frac{1}{\log^{0.5} x}\right)$ for large $x$. Because of A2, there exists a constant $C>0$ and $x_0>0$ such that for arbitrary $x>x_0$,
\begin{equation}
\vert\log L_K(x)\vert\leq \vert\log L_K(x_0)\vert+\frac{C}{1-\kappa}(\log^{1-\kappa}x-\log^{1-\kappa}x_0)\leq\epsilon \log x
\end{equation}
for large $x$, and the result is proved.
\end{proof}
The third one involves a frequently used inequality.
\begin{lemma}
Suppose $K_i,\ i=1,2,...,n$ being i.i.d random variables and define $I_i=\mathbf{1}_{K_i\in A_n}$, here suppose $A_n$ is a Borel set with positive measure as a function of sample size $n$. Define $\widehat{r}=\frac{\Sigma_{i=1}^n I_i}{n}$ and $r=\mathbf{E} I_i$, and suppose $m\geq 2$ being a constant, then there exists a constant $C_m$ such that
\begin{equation}
\vert\vert\widehat{r}-r\vert\vert_m\leq \frac{C_m (r(1-r))^{1/m}}{\sqrt{n}}
\end{equation}
\label{Burk_Holder}
\end{lemma}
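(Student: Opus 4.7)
The plan is to recognize this as a standard moment inequality for a sum of i.i.d.\ centered bounded random variables, and to invoke a Rosenthal/Burkholder-type bound. Set $S_n=\sum_{i=1}^n (I_i-r)$, so that $\widehat r-r=S_n/n$. The summands are i.i.d., centered, and bounded by $1$ in absolute value, with $\mathbf{E}(I_i-r)^2=r(1-r)$. Boundedness implies $\mathbf{E}|I_i-r|^m\leq \mathbf{E}(I_i-r)^2=r(1-r)$ for every $m\geq 2$.

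The first step is to apply Rosenthal's inequality (equivalently, Burkholder's inequality viewing the partial sums as a martingale with independent centered increments): there exists a constant $C_m>0$ depending only on $m$ such that
\begin{equation*}
\mathbf{E}|S_n|^m \;\leq\; C_m\Bigl[\,n\,\mathbf{E}|I_1-r|^m + \bigl(n\,\mathbf{E}(I_1-r)^2\bigr)^{m/2}\Bigr].
\end{equation*}
Substituting the two bounds above yields
$\mathbf{E}|S_n|^m\leq C_m\bigl[n\,r(1-r)+(n\,r(1-r))^{m/2}\bigr]$.

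Dividing by $n^m$ and taking $m$-th roots gives
\begin{equation*}
\|\widehat r-r\|_m \;\leq\; C_m^{1/m}\left[\frac{(r(1-r))^{1/m}}{n^{(m-1)/m}}+\frac{(r(1-r))^{1/2}}{n^{1/2}}\right].
\end{equation*}
For $m\geq 2$ one has $(m-1)/m\geq 1/2$, so the first term is dominated by $(r(1-r))^{1/m}/\sqrt n$; and since $r(1-r)\leq 1$ while $1/m\leq 1/2$, one has $(r(1-r))^{1/2}\leq (r(1-r))^{1/m}$, so the second term is also dominated by $(r(1-r))^{1/m}/\sqrt n$. Absorbing everything into a new constant (still called $C_m$) produces the claimed bound.

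There is no substantive obstacle here: the argument is a routine invocation of a classical moment inequality. The only mild subtlety is the cosmetic choice of writing the prefactor as $(r(1-r))^{1/m}$ rather than the sharper $(r(1-r))^{1/2}$; this weaker form is preferred because in later applications $r=P_Z(t(n))$ will be very small, and the exponent $1/m$ keeps the bound from being negligible in a way that matches the scaling needed when dividing by $\widehat p(t(n))$ on the event $\{\widehat p(t(n))\geq s(n)\}$.
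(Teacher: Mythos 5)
Your proof is correct, and it takes a closely related but distinguishable route from the paper's. The paper applies the Burkholder square-function inequality directly, $\|S_n\|_m\le C_m\bigl\|\sqrt{\textstyle\sum_i(I_i-r)^2}\bigr\|_m=C_m\|\sum_i(I_i-r)^2\|_{m/2}^{1/2}$, then uses the triangle inequality in $L^{m/2}$ (valid since $m/2\ge 1$) together with $\mathbf{E}|I_i-r|^m\le r(1-r)$ to obtain $\|S_n\|_m\le C_m\sqrt{n}\,(r(1-r))^{1/m}$ in a single step. You instead quote the two-term Rosenthal bound $\mathbf{E}|S_n|^m\le C_m[\,n\mathbf{E}|I_1-r|^m+(n\mathbf{E}(I_1-r)^2)^{m/2}]$ and then check separately that each term is dominated by $(r(1-r))^{1/m}/\sqrt n$ after dividing by $n^m$ and taking $m$-th roots. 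Both arguments hinge on the same key observation --- boundedness of $I_i-r$ gives $\mathbf{E}|I_i-r|^m\le\mathbf{E}(I_i-r)^2=r(1-r)$ --- but the paper's derivation is slightly more economical since it never produces a second term to absorb. One small technical point in your version: passing from $\mathbf{E}|S_n|^m\le C_m(a+b)$ to $\|S_n\|_m\le C_m^{1/m}(a^{1/m}+b^{1/m})$ uses the elementary inequality $(a+b)^{1/m}\le a^{1/m}+b^{1/m}$ for $a,b\ge 0$ and $m\ge 1$, which is fine but worth stating explicitly rather than silently folding into the constant. Your final remark correctly identifies why the exponent is left as $1/m$ rather than sharpened to $1/2$: in later applications $r=P_Z(t(n))\to 0$, and $1/m$ is exactly the exponent needed to balance the truncation threshold $s(n)$.
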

\begin{proof}
Define $f_k$ as
\begin{equation}
f_k=
\begin{cases}
\Sigma_{i=1}^k (I_i-r),\ k\leq n\\
\Sigma_{i=1}^n (I_i-r),\ k>n
\end{cases}
\end{equation}
Then, $f_k$ is a martingale. Since $m\geq 2$, from Minkowski inequality and Burkholder inequality \cite{10.2307/2959344}, we have
\begin{equation}
\begin{aligned}
n\vert\vert\widehat{r}-r\vert\vert_m\leq C_m\vert\vert\sqrt{\Sigma_{i=1}^n (I_i-r)^2}\vert\vert_m\\
=C_m\sqrt{\vert\vert\Sigma_{i=1}^n (I_i-r)^2\vert\vert_{m/2}}\\
\leq C_m\sqrt{\Sigma_{i=1}^n\vert\vert(I_i-r)^2\vert\vert_{m/2}}
\end{aligned}
\end{equation}
Since $m\geq 2$, we have $\mathbf{E}\vert I_i-r\vert^m\leq r(1-r)$ and the result is proved.
\end{proof}
Now, we start proving the $L_p$ convergence of estimator $\widehat{\rho}$.
\begin{theorem}
Suppose A1 and A2 and $m\geq 2$, then we have
\begin{equation}
\vert\vert\frac{\widehat{q}(t(n))}{\widehat{p}(t(n))}\mathbf{1}_{\widehat{p}(t(n))\geq s(n)}-\frac{\gamma_Y}{\gamma_X+\gamma_Y}\vert\vert_m=O\left(\frac{1}{s(n)\sqrt{n}}\right)+O\left(\frac{1}{P_Z(t(n))\sqrt{n}}\right)+O\left(\frac{1}{\log^\kappa t(n)}\right)
\end{equation}
\label{Theorem_rho}
\end{theorem}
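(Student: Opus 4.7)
\emph{Proof plan.} Write $p := P_Z(t(n))$ and $q := \mathbf{E}[\delta_i\mathbf{1}_{Z_i\geq t(n)}] = \Pr(X\geq t(n),\ X\leq Y)$ so that $\widehat p$ estimates $p$ and $\widehat q$ estimates $q$. Put $\rho^* := q/p$ and $\rho_0 := \gamma_Y/(\gamma_X+\gamma_Y)$. The plan is to split the error into a deterministic bias and a stochastic fluctuation via
\begin{equation*}
\widehat\rho-\rho_0 = \left(\frac{\widehat q}{\widehat p}-\rho^*\right)\mathbf{1}_{\widehat p\geq s(n)} + (\rho^*-\rho_0)\mathbf{1}_{\widehat p\geq s(n)} - \rho_0\mathbf{1}_{\widehat p<s(n)},
\end{equation*}
and then match the three claimed error terms to the three pieces of this decomposition.

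For the deterministic bias $\rho^*-\rho_0$, I would write $q(x)=\int_x^\infty f_X(u)P_Y(u)\,du$ and use (\ref{Deri_Kara}) together with A2 to expand $f_X(u) = \alpha_X L_X(u)u^{-\alpha_X-1}(1+O(\log^{-\kappa}u))$. By lemma \ref{Lemma_Zs}, $L_Z$ also satisfies A2, so a single integration by parts gives $\int_x^\infty L_Z(u)u^{-\alpha_Z-1}du = (P_Z(x)/\alpha_Z)(1+O(\log^{-\kappa}x))$. Hence $\rho^* = \alpha_X/\alpha_Z + O(\log^{-\kappa}t(n)) = \rho_0 + O(\log^{-\kappa}t(n))$, which produces the third summand in the claimed bound.

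For the first piece $(\widehat q/\widehat p-\rho^*)\mathbf{1}_{\widehat p\geq s(n)}$, the identity $q=\rho^* p$ gives $\widehat q-\rho^*\widehat p = (\widehat q-q)-\rho^*(\widehat p-p)$. On the truncation event $\{\widehat p\geq s(n)\}$, dividing by $\widehat p$ costs at most a factor $1/s(n)$, and lemma \ref{Burk_Holder} controls both $\|\widehat p-p\|_m$ and $\|\widehat q-q\|_m$ by $O(1/\sqrt{n})$, yielding the $O(1/(s(n)\sqrt{n}))$ summand. For the leftover $\rho_0\mathbf{1}_{\widehat p<s(n)}$, observe that under the choice of $t(n), s(n)$ in (\ref{Condition_s_t}) (in both regimes) one has $p\gg s(n)$ for large $n$, so eventually $\{\widehat p<s(n)\}\subset\{|\widehat p-p|>p/2\}$. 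Markov's inequality of order $m$ combined with lemma \ref{Burk_Holder} gives $\Pr(\widehat p<s(n)) = O(p^{1-m}n^{-m/2})$, so that $\|\rho_0\mathbf{1}_{\widehat p<s(n)}\|_m = O(p^{-(1-1/m)}/\sqrt{n}) = O(1/(P_Z(t(n))\sqrt{n}))$ since $p\leq 1$ and $m\geq 2$.

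The main obstacle is the deterministic step: establishing $\rho^*-\rho_0 = O(\log^{-\kappa}t(n))$ requires combining the Karamata representation (\ref{Deri_Kara}) with a careful single integration by parts while keeping the $O(\log^{-\kappa}u)$ correction uniform under the tail integral, so that the Karamata-type identity $\int_x^\infty L_Z(u)u^{-\alpha_Z-1}du \sim P_Z(x)/\alpha_Z$ carries the intended rate rather than merely an $o(1)$ estimate. Once this bias rate is secured, the two stochastic pieces are routine applications of lemma \ref{Burk_Holder} and Markov's inequality at the truncation threshold.
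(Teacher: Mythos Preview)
Your proposal is correct and reaches the same three error terms, but the route differs from the paper's in two places worth noting.

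For the deterministic bias, the paper does not compute $\rho^*=q/p$ via the density expansion and integration by parts you describe. Instead it introduces the conditional probability $\lambda(x)=\Pr(X\le Y\mid Z=x)=f_X(x)P_Y(x)/f_Z(x)$, shows directly from A2 that $|\lambda(x)-\rho_0|\le C\log^{-\kappa}x$ pointwise, and then integrates against $f_Z$ to get $|\mathbf{E}\widehat q(t(n))-\rho_0 P_Z(t(n))|\le C\log^{-\kappa}t(n)\,P_Z(t(n))$. This avoids the Karamata-type integral identity $\int_x^\infty L_Z(u)u^{-\alpha_Z-1}du=(P_Z(x)/\alpha_Z)(1+O(\log^{-\kappa}x))$ altogether; that identity is established separately (and much more laboriously, via a dyadic decomposition with three cases in $\kappa$) as Lemma~\ref{Lemma_Expe}, but only for the later analysis of $\widehat\zeta$. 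Your single-IBP argument for this identity is in fact cleaner than the paper's Lemma~\ref{Lemma_Expe}, though you should make explicit the bootstrap step that turns $|R(x)|\le C\log^{-\kappa}(x)\,I(x)$ into $I(x)\le 2P_Z(x)/\alpha_Z$ for large $x$.

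For the stochastic part, the paper multiplies and divides by $P_Z(t(n))$ and then uses Cauchy--Schwarz to separate the factor $P_Z/\widehat p\cdot\mathbf 1_{\widehat p\ge s(n)}$, which forces a passage to $L^{2m}$. Your decomposition $\widehat q-\rho^*\widehat p=(\widehat q-q)-\rho^*(\widehat p-p)$ combined with the pointwise bound $1/\widehat p\le 1/s(n)$ on the truncation event stays entirely at order $m$ and is slightly more economical. Both arguments rely on the same Burkholder bound (Lemma~\ref{Burk_Holder}) and on $P_Z(t(n))\gg s(n)$ from~\eqref{Condition_s_t}, so the final rates coincide.
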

\begin{proof}
According to \cite{Index_Random_Censoring}, we have that $\delta_i$ has the same distribution as $\mathbf{1}_{U_i\leq \lambda(Z_i)}$, here $U_i$ is uniform $[0,1]$ random variable being independent with $Z_i$ and
\begin{equation}
\lambda(x)=Prob(X\leq Y\vert Z=x)=\frac{f_X(x)P_Y(x)}{f_X(x)P_Y(x)+f_Y(x)P_X(x)}=\frac{1/\gamma_X-\frac{xL_X^{'}(x)}{L_X(x)}}{1/\gamma_X+1/\gamma_Y-\frac{xL_X^{'}(x)}{L_X(x)}-\frac{xL_Y^{'}(x)}{L_Y(x)}}
\label{Moti_S}
\end{equation}
Because of condition A2, for sufficiently large $x$, we have
\begin{equation}
\vert\lambda(x)-\frac{\gamma_Y}{\gamma_X+\gamma_Y}\vert\leq \frac{2}{(\gamma_X+\gamma_Y)^2}\left(\gamma_X\gamma_Y^2\vert\frac{xL_Y^{'}}{L_Y}\vert+\gamma^2_X\gamma_Y\vert\frac{xL_X^{'}}{L_X}\vert\right)
\end{equation}
Thus, there exists a constant $C$ such that for sufficiently large $x$, $\vert\lambda(x)-\frac{\gamma_Y}{\gamma_X+\gamma_Y}\vert\leq \frac{C}{\log^\kappa (x)}$ and correspondingly, from mean value theorem, we have
\begin{equation}
\vert \mathbf{E}\widehat{q}(t(n))-\frac{\gamma_Y P_Z(t(n))}{\gamma_X+\gamma_Y}\vert\leq \int_{t(n)}^\infty \vert\lambda(x)-\frac{\gamma_X}{\gamma_X+\gamma_Y}\vert f_Z(x)dx\leq \frac{C}{\log^\kappa t(n)}P_Z(t(n))
\label{Eq_range}
\end{equation}
From Minkowski inequality, we have
\begin{equation}
\begin{aligned}
\vert\vert\frac{\widehat{q}(t(n))}{\widehat{p}(t(n))}\mathbf{1}_{\widehat{p}(t(n))\geq s(n)}-\frac{\gamma_Y}{\gamma_X+\gamma_Y}\vert\vert_m\\
\leq \vert\vert\left(\frac{\widehat{q}(t(n))}{P_Z(t(n))}-\frac{\gamma_Y}{\gamma_X+\gamma_Y}\right)\frac{P_Z(t(n))}{\widehat{p}(t(n))}\mathbf{1}_{\widehat{p}(t(n))\geq s(n)}\vert\vert_m+\frac{\gamma_Y}{\gamma_X+\gamma_Y}\vert\vert\frac{P_Z(t(n))}{\widehat{p}(t(n))}\mathbf{1}_{\widehat{p}(t(n))\geq s(n)}-1\vert\vert_m
\end{aligned}
\end{equation}

If A3 is satisfied, then for sufficiently large $n$, from lemma \ref{Lemma_Zs} and \ref{Condition_s_t}, for $\forall \epsilon>0$
\begin{equation}
P_Z(t(n))\geq L_Z(t(n))t^{-1/\gamma_0}(n)\geq n^{-\beta(1/\gamma_0+\epsilon)}
\label{Css}
\end{equation}
choose small $\epsilon$ we have $c>\epsilon+\frac{\beta}{\gamma_0}$ and thus for large $n$, we have $P_Z(t(n))/2>s(n)$.

If A3 is not satisfied, from \ref{Condition_s_t}, similar with \ref{Css}, we have $P_Z(t(n))\geq \log^{-\beta(\epsilon+1/\gamma_0)} n>2s(n)$ for large $n$.

Thus from Chebyshev's inequality and lemma \ref{Burk_Holder}, we have
\begin{equation}
\begin{aligned}
\vert\vert\frac{P_Z(t(n))}{\widehat{p}(t(n))}\mathbf{1}_{\widehat{p}(t(n))\geq s(n)}-1\vert\vert_m\leq \vert\vert\left(\frac{P_Z(t(n))}{\widehat{p}(t(n))}-1\right)\mathbf{1}_{\widehat{p}(t(n))\geq s(n)}\vert\vert_m+Prob\left(\widehat{p}(t(n))<s(n)\right)^{1/m}\\
\leq \frac{\vert\vert\widehat{p}(t(n))-P_Z(t(n))\vert\vert_m}{s(n)}+\frac{2\vert\vert\widehat{p}(t(n))-P_Z(t(n))\vert\vert_m}{P_Z(t(n))}\\
=O\left(\frac{1}{s(n)\sqrt{n}}\right)+O\left(\frac{1}{P_Z(t(n))\sqrt{n}}\right)
\end{aligned}
\label{Cru_for_P_Z}
\end{equation}
This directly implies that $\vert\vert\frac{P_Z(t(n))}{\widehat{p}(t(n))}\mathbf{1}_{\widehat{p}(t(n))\geq s(n)}\vert\vert_m=O\left(1\right)$

For the first term, from Cauchy inequality, there exists a constant $C$ such that we have
\begin{equation}
\begin{aligned}
\vert\vert\left(\frac{\widehat{q}(t(n))}{P_Z(t(n))}-\frac{\gamma_Y}{\gamma_X+\gamma_Y}\right)\frac{P_Z(t(n))}{\widehat{p}(t(n))}\mathbf{1}_{\widehat{p}(t(n))\geq s(n)}\vert\vert_m \\ \leq
 \vert\vert\left(\frac{\widehat{q}(t(n))}{P_Z(t(n))}-\frac{\gamma_Y}{\gamma_X+\gamma_Y}\right)\vert\vert_{2m}\vert\vert\frac{P_Z(t(n))}{\widehat{p}(t(n))}\mathbf{1}_{\widehat{p}(t(n))\geq s(n)}\vert\vert_{2m}\\
\leq C \left(\frac{1}{P_Z(t(n))}\vert\vert\widehat{q}(t(n))-\mathbf{E}\widehat{q}(t(n))\vert\vert_{2m}+O\left(\frac{1}{\log^\kappa t(n)}\right)\right)
\end{aligned}
\end{equation}
From lemma \ref{Burk_Holder}, we get the result.
\end{proof}
Notice that, if we assume A3, form \ref{Condition_s_t}, we know that the convergence rate is of $O\left(\frac{1}{\log^\kappa n}\right)$, otherwise the convergence rate is of $O\left(1/\log^\kappa\log n\right)$.

In the next part, we will concentrate on estimating $\gamma_Z$. According to \cite{Index_Random_Censoring}, since $\gamma_Z=\frac{\gamma_X\gamma_Y}{\gamma_X+\gamma_Y}$, if we can find a suitable estimator $\widehat{\zeta}$ of $\gamma_Z$, since $\gamma_X=\gamma_Z/\frac{\gamma_Y}{\gamma_X+\gamma_Y}$, it is reasonable to consider $\widehat\zeta/\widehat\rho$.
We will prove $L_p$ convergence of its truncated version below. First we give a lemma.
\begin{lemma}
Suppose A1, A2, then as $t\to\infty$, we have
\begin{equation}
\frac{1}{P_Z(t)}\int_{t}^\infty\frac{P_Z(y)}{y}dy=\gamma_Z+O\left(\frac{1}{\log^\kappa t} \right)
\end{equation}
\label{Lemma_Expe}
\end{lemma}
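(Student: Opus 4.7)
The plan is to reduce the integral to an exact algebraic identity by integration by parts, isolate a copy of the quantity we want to bound on both sides of the equation, and then solve. Write $P_Z(y)=L_Z(y)y^{-\alpha_Z}$ and set
\begin{equation}
R(t) := \frac{1}{P_Z(t)}\int_{t}^{\infty}\frac{P_Z(y)}{y}\,dy.
\end{equation}
First I would integrate by parts with $u=L_Z(y)$, $dv=y^{-\alpha_Z-1}dy$. The boundary term at $\infty$ vanishes (since $P_Z(y)\to 0$), the boundary term at $t$ produces $\gamma_Z P_Z(t)$, and the remaining integral becomes $\gamma_Z\int_t^\infty L_Z'(y)y^{-\alpha_Z}dy$. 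Rewriting the integrand as $\frac{yL_Z'(y)}{L_Z(y)}\cdot\frac{P_Z(y)}{y}$ gives the exact identity
\begin{equation}
R(t) \;=\; \gamma_Z \;+\; \frac{\gamma_Z}{P_Z(t)}\int_{t}^{\infty}\frac{yL_Z'(y)}{L_Z(y)}\cdot\frac{P_Z(y)}{y}\,dy.
\end{equation}

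Next I would invoke Lemma \ref{Lemma_Zs}, which guarantees that $L_Z$ inherits assumption A2, so that $\bigl|yL_Z'(y)/L_Z(y)\bigr|\le C/\log^\kappa y$ for all sufficiently large $y$. Since $\log^{\kappa} y$ is increasing, for $y\ge t$ this upper bound is dominated by $C/\log^\kappa t$, which can be pulled outside the integral. The resulting estimate reads
\begin{equation}
\left|R(t)-\gamma_Z\right| \;\le\; \frac{C\gamma_Z}{\log^\kappa t}\cdot R(t).
\end{equation}

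Finally, this is a self-referential bound that I would solve algebraically: for $t$ large enough the factor $C\gamma_Z/\log^\kappa t$ is less than $1/2$, so the inequality rearranges to $R(t)\bigl(1-C\gamma_Z/\log^\kappa t\bigr)\le \gamma_Z\le R(t)\bigl(1+C\gamma_Z/\log^\kappa t\bigr)$, yielding $R(t)=\gamma_Z+O(1/\log^\kappa t)$, which is the desired conclusion.

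The only subtlety, and the step I would be most careful with, is justifying the integration by parts and in particular the vanishing of the boundary term at infinity. For this I would appeal to Lemma \ref{Lemma_Range}: taking $\epsilon<\alpha_Z$ forces $L_Z(y)y^{-\alpha_Z}\le y^{\epsilon-\alpha_Z}\to 0$ and also guarantees that $\int_t^\infty P_Z(y)/y\,dy$ is finite, so that $R(t)$ is well defined and the manipulations above are legitimate. Everything else is routine estimation.
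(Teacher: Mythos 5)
Your proof is correct and is genuinely cleaner than the paper's. Both arguments reach the same key quantity: the paper uses Fubini--Tonelli on $L_Z(y)-L_Z(t)=\int_t^y L_Z'(z)\,dz$ to arrive at $\frac{\gamma_Z}{P_Z(t)}\int_t^\infty |L_Z'(z)|z^{-\alpha_Z}\,dz$, while you obtain the same integral (without absolute values) from integration by parts. At that point the paths diverge. The paper bounds $\int_t^\infty \frac{L_Z(z)}{\log^\kappa z}z^{-\alpha_Z-1}\,dz$ head-on by chopping $[t,\infty)$ into blocks $[t\log^n t,\,t\log^{n+1}t]$, controlling the ratio $L_Z(\eta_n)/L_Z(t)$ on each block, and then running a three-way case analysis in $\kappa<1$, $\kappa=1$, $\kappa>1$. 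You instead notice that the integrand $L_Z'(y)y^{-\alpha_Z}$ factors as $\frac{yL_Z'(y)}{L_Z(y)}\cdot\frac{P_Z(y)}{y}$, pull out the uniformly small first factor, and recognize the remaining integral as exactly $P_Z(t)R(t)$ --- giving the self-referential bound $|R(t)-\gamma_Z|\le \frac{C\gamma_Z}{\log^\kappa t}R(t)$, which solves algebraically once $\log^\kappa t$ is large. This eliminates the block decomposition and the case split on $\kappa$ entirely, and the appeal to Lemma \ref{Lemma_Zs} (to transfer A2 to $L_Z$) and Lemma \ref{Lemma_Range} (to justify convergence and the vanishing boundary term) is exactly what is needed. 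The one point worth stating explicitly if you write this up is that the constant $C$ and the threshold in ``for sufficiently large $y$'' from A2 are fixed once and for all, so that the inequality $|R(t)-\gamma_Z|\le \frac{C\gamma_Z}{\log^\kappa t}R(t)$ holds with a single $C$ for all $t$ past a fixed $t_0$; you clearly intend this and it is true.
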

\begin{proof}
Since $\gamma_Z=\frac{1}{P_Z(t)}\int_{t}^\infty\frac{L_Z(t)}{y^{(1+\alpha_Z)}}dy$, from Fubini-Tonelli theorem and A2, for large $t$, there exists constant $C$ such that
\begin{equation}
\begin{aligned}
\vert\frac{1}{P_Z(t)}\int_{t}^\infty\frac{P_Z(y)}{y}dy-\gamma_Z\vert=\frac{1}{P_Z(t)}\vert\int_{t}^\infty\frac{L_Z(y)-L_Z(t)}{y^{\alpha_Z+1}}dy\vert\\
\leq \frac{1}{P_Z(t)}\int_{t}^\infty\frac{dy}{y^{\alpha_Z+1}}\int_{t}^y \vert L_Z^{'}(z)\vert dz\\
=\frac{\gamma_Z}{P_Z(t)}\int_{t}^\infty \vert L_Z^{'}(z)\vert z^{-\alpha_Z}dz\\
\leq \frac{\gamma_Z C}{P_Z(t)}\int_{t}^\infty \frac{L_Z(z)}{\log^\kappa z}z^{-\alpha_Z-1}dz
\end{aligned}
\label{Thr}
\end{equation}
From mean value theorem, suppose $\alpha_Z=1/\gamma_Z$
\begin{equation}
\begin{aligned}
\int_{t}^\infty\frac{L_Z(z)}{\log^\kappa z}z^{-1/\gamma_Z-1}dz=\Sigma_{n=0}^\infty\int_{t\log^n t}^{t\log^{n+1}t}\frac{L_Z(z)}{z^{\alpha_Z+1}\log^\kappa z}dz\\
=\Sigma_{n=0}^\infty\frac{\gamma_Z L_Z(\eta_n)}{\log^\kappa \eta_n}\frac{1}{t^{\alpha_Z}\log^{n\alpha_Z}t}\left(1-\frac{1}{\log^{\alpha_Z} t}\right)\\
\leq \Sigma_{n=0}^\infty\frac{\gamma_Z L_Z(\eta_n)}{\left(\log t+n\log\log t\right)^\kappa}\frac{1}{t^{\alpha_Z}\log^{n\alpha_Z}t}
\end{aligned}
\label{Sec_Cr}
\end{equation}
Notice that, from assumption A2, if $\kappa\neq 1$, then
\begin{equation}
\begin{aligned}
\vert \log L_Z(\eta_n)-\log L_Z(t)\vert\leq \int_{t}^{\eta_n} \vert\frac{L_Z^{'}(z)}{L_Z(z)}\vert dz\leq \frac{C}{1-\kappa}(\log^{1-\kappa}\eta_n-\log^{1-\kappa}t)\\
\leq \frac{C}{\vert 1-\kappa\vert}\vert (\log t+(n+1)\log\log t)^{1-\kappa}-\log^{1-\kappa} t\vert
\end{aligned}
\label{Cru_lemma}
\end{equation}
And if $\kappa=1$, then similarly we have $\vert \log L_Z(\eta_n)-\log L_Z(t)\vert\leq $, here $C$ is a constant. We continue proof with 3 different cases.

Case 1: $0<\kappa<1$. In this case, for a given constant $D$ and sufficiently large $t$, equation \ref{Cru_lemma} is less than
\begin{equation}
\frac{C}{1-\kappa}\left(\frac{\log t}{\log^\kappa t}+\frac{((n+1)\log\log t)}{((n+1)\log\log t)^\kappa}-\log^{1-\kappa} t\right)\leq \frac{(n+1)^{1-\kappa}}{(1-\kappa)D}\log\log t
\end{equation}
This implies that $\frac{L_Z(\eta_n)}{L_Z(t)}\leq (\log t)^{\frac{(n+1)^{1-\kappa}}{D(1-\kappa)}}$, combine with \ref{Thr} and \ref{Sec_Cr}, we have
\begin{equation}
\vert\frac{1}{P_Z(t)}\int_{t}^\infty\frac{P_Z(y)}{y}dy-\gamma_Z\vert\leq C\gamma_z^2\left(\frac{L_Z(\eta_0)}{L_Z(t)\log^\kappa t}+\Sigma_{n=1}^\infty\frac{1}{(\log t+n\log\log t)^\kappa(\log t)^{n\alpha_Z-\frac{(n+1)^{1-\kappa}}{D(1-\kappa)}}}\right)
\end{equation}
Since
\begin{equation}
\log L_Z(\eta_0)-\log L_Z(t)\leq \frac{1}{1-\kappa}\left(\log^{1-\kappa} t\left(1+\frac{\log\log t}{\log t}\right)^{1-\kappa}-\log^{1-\kappa} t\right)\leq \frac{1}{1-\kappa} \frac{\log\log t}{\log^\kappa t}=o(1)
\end{equation}
Thus, for large $t$, $\frac{L_Z(\eta_0)}{L_Z(t)}<2$. Also, for $n\geq 1$, we have $n+1\leq 2n$ and
\begin{equation}
\frac{n\alpha_Z}{2}>\frac{(n+1)^{1-\kappa}}{D(1-\kappa)}\Leftarrow Dn^\kappa\alpha_Z>\frac{2^{2-\kappa}}{1-\kappa}\Leftarrow D\alpha_Z>\frac{2^{2-\kappa}}{1-\kappa}
\end{equation}
choose $D$ satisfies this condition then
\begin{equation}
\Sigma_{n=1}^\infty\frac{1}{(\log t)^{n\alpha_Z-\frac{(n+1)^{1-\kappa}}{D(1-\kappa)}}}\leq \Sigma_{n=1}^\infty \frac{1}{4^{n\alpha_Z-\frac{(n+1)^{1-\kappa}}{D(1-\kappa)}}}\leq \Sigma_{n=1}^\infty\frac{1}{2^{n\alpha_Z}}<\infty
\end{equation}
And we prove the result.

Case 2: $\kappa=1$. If $\kappa=1$, from \ref{Cru_lemma}, we have $\frac{L_Z(\eta_n)}{L_Z(t)}\leq \left(1+\frac{(n+1)\log\log t}{\log t}\right)^C$, correspondingly, combine with \ref{Thr} and \ref{Sec_Cr}, for large $t$, we have
\begin{equation}
\begin{aligned}
\vert\frac{1}{P_Z(t)}\int_{t}^\infty\frac{P_Z(y)}{y}dy-\gamma_Z\vert\leq \frac{C\gamma_Z^2}{\log t}\Sigma_{n=0}^\infty\frac{1}{\log^{n\alpha_Z}t}\left(1+\frac{(n+1)\log\log t}{\log t}\right)^C\\
\leq \frac{C\gamma_Z^2}{\log t}\left(2^C+\Sigma_{n=1}^\infty\frac{(n+2)^C}{2^{n\alpha_Z}}\right)
\end{aligned}
\end{equation}
Since $\Sigma_{n=1}^\infty \frac{(n+2)^C}{2^{n\alpha_Z}}<\infty$, the result is proved.

Case 3: $\kappa>1$. If $\kappa>1$, from \ref{Cru_lemma}, we have $\frac{L_Z(\eta_n)}{L_Z(t)}\leq \exp(2C/(\kappa-1))$, combine with \ref{Sec_Cr} and we prove the result.
\end{proof}
Now, we start to prove the $L_p$ convergence of statistics $\widehat\zeta$.
\begin{theorem}
Suppose A1,A2 and $m\geq 2$ then we have
\begin{equation}
\vert\vert\widehat\zeta-\gamma_Z\vert\vert_m=O\left(\frac{1}{s(n)\sqrt{n}}+\frac{1}{P_Z(t(n))\sqrt{n}}+\frac{1}{\log^\kappa t(n)}\right)
\end{equation}
\label{Theorem_zeta}
\end{theorem}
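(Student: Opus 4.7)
The plan is to mimic the structure of the proof of Theorem \ref{Theorem_rho}, treating the ``Hill-like'' numerator
\[
\widehat{\xi}:=\int_{t(n)}^\infty \frac{\widehat{p}(y)}{y}\,dy=\frac{1}{n}\sum_{i=1}^n\log\!\left(\frac{Z_i}{t(n)}\right)\mathbf{1}_{Z_i\ge t(n)}
\]
as the analog of $\widehat{q}(t(n))$, with target expectation $\gamma_Z P_Z(t(n))$. Since $\widehat{\zeta}=(\widehat{\xi}/\widehat{p}(t(n)))\mathbf{1}_{\widehat{p}(t(n))\ge s(n)}$, I would begin with the algebraic identity
\[
\widehat{\zeta}-\gamma_Z=\left(\frac{\widehat{\xi}}{P_Z(t(n))}-\gamma_Z\right)\frac{P_Z(t(n))}{\widehat{p}(t(n))}\mathbf{1}_{\widehat{p}(t(n))\ge s(n)}+\gamma_Z\left(\frac{P_Z(t(n))}{\widehat{p}(t(n))}\mathbf{1}_{\widehat{p}(t(n))\ge s(n)}-1\right)
\]
and apply Minkowski's inequality to handle the two summands separately.

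The second summand is exactly the quantity bounded in \ref{Cru_for_P_Z}, whose proof never used the specific form of $\widehat{q}$, so it contributes $O(1/(s(n)\sqrt{n})+1/(P_Z(t(n))\sqrt{n}))$ to the $L_m$ norm. For the first summand, Cauchy--Schwarz bounds its $L_m$ norm by $\|\widehat{\xi}/P_Z(t(n))-\gamma_Z\|_{2m}\cdot\|(P_Z(t(n))/\widehat{p}(t(n)))\mathbf{1}_{\widehat{p}(t(n))\ge s(n)}\|_{2m}$, where the second factor is $O(1)$ by reusing \ref{Cru_for_P_Z} at exponent $2m$. The first factor splits, by the triangle inequality, into a deterministic bias $|\mathbf{E}\widehat{\xi}/P_Z(t(n))-\gamma_Z|=O(1/\log^\kappa t(n))$, which is precisely Lemma \ref{Lemma_Expe} since $\mathbf{E}\widehat{\xi}=\int_{t(n)}^\infty P_Z(y)/y\,dy$, plus a stochastic fluctuation $\|\widehat{\xi}-\mathbf{E}\widehat{\xi}\|_{2m}/P_Z(t(n))$.

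For the fluctuation I would repeat the martingale/Burkholder argument of Lemma \ref{Burk_Holder} verbatim, replacing the Bernoulli increments $I_i-r$ by $W_i-\mathbf{E} W_i$ with $W_i=\log(Z_i/t(n))\mathbf{1}_{Z_i\ge t(n)}$; this gives $\|\widehat{\xi}-\mathbf{E}\widehat{\xi}\|_{2m}\le C_{2m}\|W_1\|_{2m}/\sqrt{n}$. The remaining task is the moment estimate $\mathbf{E}|W_1|^{2m}=O(P_Z(t(n)))$. Integration by parts and the substitution $u=\log(y/t(n))$ convert this into $2m\int_0^\infty u^{2m-1}P_Z(t(n)e^u)\,du$, and I would control the integrand by a Potter-type ratio bound $L_Z(t(n)e^u)/L_Z(t(n))\le 2e^{\delta u}$ for any $0<\delta<\alpha_Z$ and $t(n)$ large, which follows from A2 via the same integration of $|L_Z'/L_Z|$ that powers Lemma \ref{Lemma_Range}. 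The integral then collapses to a finite multiple of $P_Z(t(n))$, giving $\|W_1\|_{2m}=O(P_Z(t(n))^{1/(2m)})$. Since $m\ge 2$ and $P_Z(t(n))\le 1$, dividing by $P_Z(t(n))$ folds $P_Z(t(n))^{1/(2m)-1}$ into $P_Z(t(n))^{-1}$, and the fluctuation contribution becomes $O(1/(P_Z(t(n))\sqrt{n}))$, matching the advertised rate.

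The main obstacle is precisely this moment bound. Lemma \ref{Burk_Holder} is trivial for indicator increments because $|I_1|\le 1$, but $W_1$ carries an unbounded logarithmic factor that interacts non-trivially with the heavy tail; bounding $L_Z(t(n)e^u)$ pointwise via Lemma \ref{Lemma_Range} alone is too loose, since it would leak spurious polynomial factors in $t(n)$. The saving grace is that only the \emph{ratio} $L_Z(t(n)e^u)/L_Z(t(n))$ needs to be tamed, and this is where A2 is used in an essential way. Once this moment estimate is in hand, everything else is the same Minkowski/Cauchy--Schwarz book-keeping as in Theorem \ref{Theorem_rho}.
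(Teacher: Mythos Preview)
Your proposal is correct and follows the paper's overall architecture: the same algebraic decomposition, the same appeal to \ref{Cru_for_P_Z} for the denominator term, the same Cauchy--Schwarz splitting of the numerator into bias (Lemma~\ref{Lemma_Expe}) plus fluctuation. The one substantive difference is in how you bound the fluctuation $\|\widehat{\xi}-\mathbf{E}\widehat{\xi}\|_{2m}$. You apply Burkholder directly to the i.i.d.\ increments $W_i=\log(Z_i/t(n))\mathbf{1}_{Z_i\ge t(n)}$ and then compute the moment $\mathbf{E}|W_1|^{2m}=O(P_Z(t(n)))$ via a Potter-type ratio bound on $L_Z(t(n)e^u)/L_Z(t(n))$ derived from A2. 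The paper instead keeps $\widehat{\xi}$ in its integral form $\int_{t(n)}^\infty \widehat{p}(y)/y\,dy$, pushes the $L_{2m}$ norm inside the integral by the integral Minkowski inequality, and then applies Lemma~\ref{Burk_Holder} \emph{pointwise} to the indicators $\mathbf{1}_{Z_i\ge y}$, obtaining $\int_{t(n)}^\infty C_{2m}P_Z^{1/2m}(y)/(y\sqrt{n})\,dy$; the remaining integral is dispatched with the crude bound of Lemma~\ref{Lemma_Range}. The paper's route thus sidesteps entirely what you flag as ``the main obstacle'' (the moment of $W_1$), at the cost of one extra use of Minkowski; your route is closer to how Hill-type estimators are usually analyzed and makes the dependence on the tail more transparent, but requires the additional Potter-bound computation. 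Both yield the same rate $O(1/(P_Z(t(n))\sqrt{n}))$ for this piece.
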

\begin{proof}
From definition of $\widehat\zeta$ (see \ref{Stat_Fac}) and Minkowski inequality,
\begin{equation}
\begin{aligned}
\vert\vert\widehat{\zeta}-\gamma_Z\vert\vert_m\leq \vert\vert\left(\frac{1}{P_Z(t(n))}\int_{t(n)}^\infty\frac{\widehat{p}(y)}{y}dy-\gamma_Z\right)\frac{P_Z(t(n))}{\widehat{p}(t(n))}\mathbf{1}_{\widehat{p}(t(n))\geq s(n)}\vert\vert_m+\\
\gamma_Z\vert\vert\left(\frac{P_Z(t(n))}{\widehat{p}(t(n))}-1\right)\mathbf{1}_{\widehat{p}(t(n))\geq s(n)}\vert\vert_m+\gamma_Z Prob\left(\widehat{p}(t(n))<s(n)\right)^{1/m}
\end{aligned}
\end{equation}
For the second and the third term, from \ref{Cru_for_P_Z} we know that these term is of order $O\left(\frac{1}{s(n)\sqrt{n}}+\frac{1}{P_Z(t(n))\sqrt{n}}\right)=o(1)$. For the first term, from Cauchy inequality and Minkowski inequality, it is less than
\begin{equation}
\vert\vert \frac{P_Z(t(n))}{\widehat{p}(t(n))}\mathbf{1}_{\widehat{p}(t(n))\geq s(n)}\vert\vert_{2m}\left(\frac{1}{P_Z(t(n))}\vert\vert\int_{t(n)}^\infty\frac{\widehat{p}(y)-P_Z(y)}{y}dy \vert\vert_{2m}+\vert\frac{1}{P_Z(t(n))}\int_{t(n)}^\infty\frac{P_Z(y)}{y}dy-\gamma_Z\vert\right)
\label{Div}
\end{equation}
From \ref{Cru_for_P_Z}, $\vert\vert \frac{P_Z(t(n))}{\widehat{p}(t(n))}\mathbf{1}_{\widehat{p}(t(n))\geq s(n)}\vert\vert_{2m}=O\left(1\right)$, from integral version Minkowski inequality and lemma \ref{Burk_Holder}, we have
\begin{equation}
\begin{aligned}
\vert\vert\int_{t(n)}^\infty\frac{\widehat{p}(y)-P_Z(y))}{y}dy \vert\vert_{2m}\leq \int_{t(n)}^\infty\frac{\vert\vert\widehat{p}(y)-P_Z(y)\vert\vert_{2m}}{y}dy\\
\leq \int_{t(n)}^\infty\frac{C_{2m}P_Z^{1/2m}(y)}{\sqrt{n}y}dy\\
=\int_{t(n)}^\infty\frac{C_{2m}L_Z^{1/2m}(y)}{\sqrt{n}y^{1+\alpha_Z/2m}}dy
\end{aligned}
\end{equation}
From lemma \ref{Lemma_Range}, choose $\epsilon=\alpha_Z/2$, for sufficiently large $y$, $L_Z(y)^{1/2m}\leq y^{\alpha_Z/4m}$ and thus the integration is less than $\frac{4mC_{2m}}{\alpha_Z\sqrt{n}}t(n)^{-\alpha_Z/4m}=o(1/\sqrt{n})$. Thus,
\begin{equation}
\frac{1}{P_Z(t(n))}\vert\vert\int_{t(n)}^\infty\frac{\widehat{p}(y)-P_Z(y)}{y}dy \vert\vert_{2m}=O\left(\frac{1}{P_Z(t(n))\sqrt{n}}\right)
\label{Ref_for_1}
\end{equation}
For the second term in \ref{Div}, use lemma \ref{Lemma_Expe} and we prove the result.
\end{proof}
Similarly, if in addition we assume A3, from \ref{Condition_s_t} we know that the convergence rate is of $O\left(\frac{1}{\log^\kappa n}\right)$ and otherwise the convergence rate becomes $O\left(\frac{1}{\log^\kappa\log n}\right)$.

Finally, we apply discussions above to prove theorem \ref{Theorem_K}.

\begin{proof}[Proof for theorem \ref{Theorem_K}]
We choose $\mu=\nu=m/2$ in theorem 1 of $\cite{Vasiliev2014}$, according to \ref{Theorem_rho} and \ref{Theorem_zeta}, since exists constant $C$ such that for sufficiently large $n$,
\begin{equation}
\begin{aligned}
\mathbf{E}\vert\widehat{\zeta}-\gamma_Z\vert^{2\nu}=\vert\vert \widehat{\zeta}-\gamma_Z\vert\vert_m^m\leq \frac{C^m}{\log^{m\kappa} n}\\
w_n=\mathbf{E}\vert\widehat{\rho}-\frac{\gamma_Y}{\gamma_X+\gamma_Y}\vert^{2\mu}\leq \frac{C^m}{\log^{m\kappa} n}
\end{aligned}
\end{equation}
Choose $\phi_n(m)=\frac{2^{m-1}}{\left(\gamma_Y/(\gamma_X+\gamma_Y)\right)^m}\frac{C^m(\gamma_Z^m+\left(\frac{\gamma_X}{\gamma_X+\gamma_Y}\right)^m)}{\log^{m\kappa} n}=O\left(\frac{1}{\log^{m\kappa}n}\right)$ and choose $\beta=m/4$, $\beta\geq 1$, then we have
\begin{equation}
\mathbf{E}\vert\frac{\widehat\zeta}{\widehat\rho}\mathbf{1}_{\widehat\rho\geq H_n}-\gamma_X\vert^{2\beta}\leq V_n(\beta)
\label{F_1}
\end{equation}
Here
\begin{equation}
V_n(\beta)=O\left(\phi_n(2\beta)+\frac{\phi_n^{1/2}(m)w_n^{1/4}}{H_n^\beta}+\frac{\phi_n^{1/2}(m)w_n^{1/2}}{H_n^{2\beta}}+w_n\right)=O\left(\frac{1}{\log^{m\kappa/2} n}\right)
\label{F_2}
\end{equation}
Combine \ref{F_1} and \ref{F_2}, we prove the result.
\end{proof}
In particular, this directly proves the $L_{m/2}$ convergence of the statistics.
\section{Almost sure convergence of tail index estimator\label{As_Convergence}}
In this section, we try to prove the almost sure convergence of the tail index estimator under assumption A1-A3. We first introduce two lemma.
\begin{theorem}
Suppose A1-A3, and $t(n), s(n)$ are chosen as in \ref{Condition_s_t}, then we have
\begin{equation}
\frac{\widehat{q}(t(n))}{\widehat{p}(t(n))}\mathbf{1}_{\widehat{p}(t(n))\geq s(n)}\to_{a.s.}\frac{\gamma_Y}{\gamma_X+\gamma_Y}
\end{equation}
\label{As_p_q}
\end{theorem}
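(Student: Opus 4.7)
The plan is to upgrade the $L_m$ bound of Theorem \ref{Theorem_rho} to almost sure convergence via Markov's inequality and Borel--Cantelli. The logarithmic rate $O(1/\log^\kappa n)$ is \emph{not} directly summable, however, so instead of applying Markov to the quantity of interest as a whole, I would separate the deterministic bias (which by \ref{Eq_range} is already $O(1/\log^\kappa t(n)) \to 0$) from the stochastic fluctuation around the expectation (which, after dividing by $P_Z(t(n))$, turns out to decay polynomially in $n$ under A3, hence can be made summable by choosing $m$ large).

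The technical core is to show separately that $\widehat{p}(t(n))/P_Z(t(n)) \to 1$ a.s.\ and $\widehat{q}(t(n))/P_Z(t(n)) \to \gamma_Y/(\gamma_X+\gamma_Y)$ a.s. For the first, Lemma \ref{Burk_Holder} gives $\|\widehat{p}(t(n)) - P_Z(t(n))\|_m \leq C_m P_Z(t(n))^{1/m}/\sqrt{n}$, and Markov's inequality yields
\begin{equation*}
P\!\left(\left|\frac{\widehat{p}(t(n))}{P_Z(t(n))} - 1\right| > \epsilon\right) \leq \frac{C_m^m}{\epsilon^m\, P_Z(t(n))^{m-1}\, n^{m/2}}.
\end{equation*}
By Lemma \ref{Lemma_Range} applied to $L_Z$, for any $\eta > 0$ and large $n$, $P_Z(t(n)) \geq n^{-\beta(1/\gamma_0 + \eta)}$, so the bound is $O(n^{-(m/2 - (m-1)\beta(1/\gamma_0 + \eta))})$. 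Since $\beta/\gamma_0 < 1/2$, choosing $\eta$ small enough and $m$ sufficiently large makes this exponent exceed $1$, and Borel--Cantelli delivers the a.s.\ limit. For the second, the bias $|\mathbf{E}\widehat{q}(t(n))/P_Z(t(n)) - \gamma_Y/(\gamma_X + \gamma_Y)| = O(1/\log^\kappa t(n))$ is deterministic and vanishes, while the stochastic part is handled by the identical argument, using $\mathbf{E}\widehat{q}(t(n)) \leq P_Z(t(n))$ to bound the $L_m$ fluctuation by $C_m P_Z(t(n))^{1/m}/\sqrt{n}$.

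To close, Definition \ref{Defff_2} gives $c > \beta/\gamma_0$, so $P_Z(t(n))/s(n) \to \infty$; combined with $\widehat{p}(t(n))/P_Z(t(n)) \to 1$ a.s., this forces $\widehat{p}(t(n)) \geq s(n)$ for all sufficiently large $n$ almost surely, so the indicator equals $1$ eventually and the quotient $\widehat{q}(t(n))/\widehat{p}(t(n))$ converges a.s.\ to $\gamma_Y/(\gamma_X + \gamma_Y)$. The main obstacle is the Borel--Cantelli bookkeeping: even though $P_Z(t(n))$ shrinks polynomially, one has to exhibit an $m$ large enough that the exponent $m/2 - (m-1)\beta(1/\gamma_0 + \eta)$ is strictly greater than $1$, and this is precisely where the sharp constraint $\beta < \gamma_0/2$ baked into A3 and Definition \ref{Defff_2} is used.
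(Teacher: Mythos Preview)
Your proposal is correct and rests on the same engine as the paper's proof: Borel--Cantelli via Markov's inequality, with Lemma~\ref{Burk_Holder} supplying the moment bound $\|\widehat r - r\|_m \le C_m r^{1/m}/\sqrt n$ and assumption A3 guaranteeing that $P_Z(t(n))$ decays only polynomially, so a sufficiently large $m$ makes the tail probabilities summable. The organization differs, however. The paper works directly with the truncated statistic and splits
\[
\frac{\widehat q}{\widehat p}\mathbf 1_{\widehat p\ge s(n)}-\frac{\gamma_Y}{\gamma_X+\gamma_Y}
\]
into three pieces (a cross term, a bias term, and a pure fluctuation term), using the indicator to replace $1/\widehat p$ by $1/s(n)$ inside the moment bounds. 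You instead prove $\widehat p/P_Z\to 1$ and $\widehat q/P_Z\to\gamma_Y/(\gamma_X+\gamma_Y)$ almost surely \emph{without} the indicator, normalizing by $P_Z(t(n))$ rather than $s(n)$, and only afterwards argue that $\mathbf 1_{\widehat p\ge s(n)}\to 1$ a.s.\ from $P_Z(t(n))/s(n)\to\infty$. Your route is a bit more modular and avoids the product/cross-term bookkeeping (no AM--GM step as in the paper's \eqref{F_p}); the paper's route keeps closer contact with the actual estimator throughout. Either way the decisive input is the same: $\beta/\gamma_0<1/2$ from \eqref{Condition_s_t} makes the exponent $m/2-(m-1)\beta(1/\gamma_0+\eta)$ eventually exceed $1$.
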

\begin{proof}
From Borel-Cantelli lemma \cite{Probabilty_Stochastic}, it suffices to show that, for $\forall \epsilon>0$,
\begin{equation}
\Sigma_{n=1}^\infty Prob\left(\vert\frac{\widehat{q}(t(n))}{\widehat p(t(n))}\mathbf{1}_{\widehat p(t(n))\geq s(n)}-\frac{\gamma_Y}{\gamma_X+\gamma_Y}\vert> 3\epsilon\right)<\infty
\label{Key_pq}
\end{equation}
Since
\begin{equation}
\begin{aligned}
Prob\left(\vert\frac{\widehat q(t(n))}{\widehat p(t(n))}\mathbf{1}_{\widehat p(t(n))\geq s(n)}-\frac{\gamma_Y}{\gamma_X+\gamma_Y}\vert>3\epsilon\right)\\
\leq Prob\left(\vert\left(\frac{\widehat q(t(n))}{P_Z(t(n))}-\frac{\mathbf{E}\widehat q(t(n))}{P_Z(t(n))}\right)\left(\frac{P_Z(t(n))}{\widehat p(t(n))}-1\right)\mathbf{1}_{\widehat p(t(n))\geq s(n)}\vert\geq \epsilon\right)\\
+Prob\left(\vert\frac{\mathbf{E}\widehat q(t(n))}{P_Z(t(n))}\frac{P_Z(t(n))}{\widehat p(t(n))}\mathbf{1}_{\widehat p(t(n))\geq s(n)}-\frac{\gamma_Y}{\gamma_X+\gamma_Y}\vert\geq \epsilon\right)\\
+Prob\left(\vert\frac{\widehat q(t(n))-\mathbf{E}\widehat q(t(n))}{P_Z(t(n))}\vert\mathbf{1}_{\widehat p(t(n))\geq s(n)}\geq \epsilon\right)
\end{aligned}
\end{equation}
We will separately discuss these 3 terms below.

For the first term, notice that for $\forall k>1$, from mean value inequality and Minkowski inequality, we have
\begin{equation}
\begin{aligned}
Prob\left(\vert\left(\frac{\widehat q(t(n))-\mathbf E\widehat q(t(n))}{P_Z(t(n))}\right)\left(\frac{P_Z(t(n))}{\widehat p(t(n))}-1\right)\vert\mathbf{1}_{\widehat p(t(n))\geq s(n)}\geq \epsilon\right)\\
\leq \frac{1}{\epsilon^k}\vert\vert\left(\frac{\widehat q(t(n))-\mathbf E \widehat q(t(n))}{P_Z(t(n))}\right)\left(\frac{P_Z(t(n))}{\widehat p(t(n))}-1\right)\mathbf 1_{\widehat p(t(n))\geq s(n)}\vert\vert_k^k\\
\leq \frac{1}{2^k\epsilon^k}\vert\vert\left(\frac{\widehat q(t(n))-\mathbf E\widehat q(t(n))}{P_Z(t(n))}\right)^2+\left(\frac{\widehat p(t(n))-\mathbf E \widehat p(t(n))}{\widehat p(t(n))}\right)^2\mathbf 1_{\widehat p(t(n))\geq s(n)}\vert\vert_k^k\\
\leq \frac{1}{2^k\epsilon^k}\left(\vert\vert\frac{\widehat q(t(n))-\mathbf E\widehat q(t(n))}{P_Z(t(n))}\vert\vert_{2k}^2+\frac{1}{s(n)^2}\vert\vert\widehat p(t(n))-\mathbf E\widehat p(t(n))\vert\vert_{2k}^2\right)^k
\end{aligned}
\label{F_p}
\end{equation}
From lemma \ref{Burk_Holder}, $\vert\vert\widehat q(t(n))-\mathbf E \widehat q(t(n))\vert\vert_{2k}^2=O\left(\frac{P_Z^{1/k}(t(n))}{n}\right)$ and $\vert\vert\widehat p(t(n))-\mathbf E\widehat p(t(n))\vert\vert_{2k}^2=O\left(\frac{P_Z^{1/k}(t(n))}{n}\right)$, choose $k>\max\left(\frac{1}{1/2-\beta/\gamma_0},\frac{1}{1-2c}\right)$ then the convergence of summation of first term is proved.

For the second term, notice that it is smaller than
\begin{equation}
Prob\left(\vert\frac{\mathbf E\widehat q(t(n))}{P_Z(t(n))}\frac{P_Z(t(n))}{\widehat p(t(n))}\mathbf{1}_{\widehat p(t(n))\geq s(n)}-\frac{\mathbf E\widehat q(t(n))}{P_Z(t(n))}\vert\geq \epsilon-\vert\frac{\mathbf E\widehat q(t(n))}{P_Z(t(n))}-\frac{\gamma_Y}{\gamma_X+\gamma_Y}\vert\right)
\end{equation}
From \ref{Eq_range}, for sufficiently large $n$, $\vert\frac{\mathbf E\widehat q(t(n))}{P_Z(t(n))}-\frac{\gamma_Y}{\gamma_X+\gamma_Y}\vert\leq \epsilon/2$, and $\frac{\mathbf E\widehat q(t(n))}{P_Z(t(n))}\leq \frac{2\gamma_Y}{\gamma_X+\gamma_Y}$. Since for $\forall k>1$
\begin{equation}
\begin{aligned}
Prob\left(\vert\frac{P_Z(t(n))}{\widehat p(t(n))}\mathbf 1_{\widehat p(t(n))\geq s(n)}-1\vert\geq \frac{(\gamma_X+\gamma_Y)\epsilon}{4\gamma_Y}\right)\\
\leq \left(\frac{4\gamma_Y}{\epsilon(\gamma_X+\gamma_Y)}\right)^k\left(\frac{1}{s(n)}\vert\vert\widehat p(t(n))-\mathbf E\widehat p(t(n))\vert\vert_k+Prob(\widehat p(t(n))<s(n))\right)^k
\end{aligned}
\label{X_s}
\end{equation}
From \ref{Cru_for_P_Z} and similar to \ref{F_p}, choose sufficiently large $k$ and we know that summation of this term converges.

For the third term, similar with \ref{F_p} and we prove can prove the convergence of summation. Since \ref{Key_pq} is true, almost sure convergence is proved as well.
\end{proof}

\begin{theorem}
Suppose A1-A3 and suppose $s(n)$ and $t(n)$ are chosen as in \ref{Condition_s_t}, then we have
\begin{equation}
\widehat\zeta\to_{a.s.}\gamma_Z
\end{equation}
definition of $\widehat\zeta$ is in \ref{Stat_Fac}.
\label{As_zeta}
\end{theorem}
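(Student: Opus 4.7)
The plan is to mirror the structure of the proof of Theorem \ref{As_p_q}, with the random integral $\int_{t(n)}^\infty \widehat p(y)/y\,dy$ playing the role of $\widehat q(t(n))$ and Lemma \ref{Lemma_Expe} replacing the deterministic bound \ref{Eq_range}. By the Borel--Cantelli lemma it suffices to prove that for every $\epsilon>0$,
\begin{equation*}
\sum_{n=1}^\infty \mathrm{Prob}\bigl(|\widehat\zeta-\gamma_Z|>3\epsilon\bigr)<\infty.
\end{equation*}

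First I would establish the algebraic identity $\widehat\zeta-\gamma_Z=T_1+T_2+T_3$ with
\begin{align*}
T_1 &= \gamma_Z\left(\frac{P_Z(t(n))}{\widehat p(t(n))}\mathbf 1_{\widehat p(t(n))\geq s(n)}-1\right),\\
T_2 &= \frac{P_Z(t(n))}{\widehat p(t(n))}\mathbf 1_{\widehat p(t(n))\geq s(n)}\left(\frac{1}{P_Z(t(n))}\int_{t(n)}^\infty\frac{P_Z(y)}{y}dy-\gamma_Z\right),\\
T_3 &= \frac{1}{\widehat p(t(n))}\mathbf 1_{\widehat p(t(n))\geq s(n)}\int_{t(n)}^\infty\frac{\widehat p(y)-P_Z(y)}{y}dy.
\end{align*}
When the indicator vanishes $T_1=-\gamma_Z$ and $T_2=T_3=0$, correctly matching $\widehat\zeta-\gamma_Z=-\gamma_Z$. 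Then $\mathrm{Prob}(|\widehat\zeta-\gamma_Z|>3\epsilon)$ is bounded by the sum of $\mathrm{Prob}(|T_i|>\epsilon)$ for $i=1,2,3$, so it suffices to establish summability of each.

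For $T_1$ the work has essentially already been done inside the proof of Theorem \ref{As_p_q}: Markov's inequality at a sufficiently high integer power $k$, together with \ref{Cru_for_P_Z}, lemma \ref{Burk_Holder}, and the inequality $P_Z(t(n))\geq 2s(n)$ coming from \ref{Css} under A3, yields summability once $k>\max(1/(1/2-\beta/\gamma_0),\,1/(1-2c))$. For $T_2$, Lemma \ref{Lemma_Expe} gives that the deterministic factor is $O(1/\log^\kappa t(n))=o(1)$; hence for large $n$ the event $|T_2|>\epsilon$ forces the random factor $P_Z(t(n))/\widehat p(t(n))\,\mathbf 1_{\widehat p(t(n))\geq s(n)}$ to exceed a diverging quantity, and summability again follows from Markov and the $L^{2k}$ control of this factor provided by \ref{Cru_for_P_Z}.

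The genuinely new ingredient, and the main obstacle, is $T_3$. I would control it by Markov's inequality followed by Cauchy:
\begin{equation*}
\mathrm{Prob}(|T_3|>\epsilon)\leq \frac{1}{\epsilon^k}\left\|\frac{P_Z(t(n))}{\widehat p(t(n))}\mathbf 1_{\widehat p(t(n))\geq s(n)}\right\|_{2k}^k\cdot \frac{1}{P_Z(t(n))^k}\left\|\int_{t(n)}^\infty\frac{\widehat p(y)-P_Z(y)}{y}dy\right\|_{2k}^k.
\end{equation*}
By \ref{Cru_for_P_Z} the first factor is $O(1)$, and by the integral Minkowski inequality combined with lemma \ref{Burk_Holder} (the same calculation as in the derivation of \ref{Ref_for_1}) the $L^{2k}$ norm of the integral is $O(1/\sqrt n)$; this yields $\mathrm{Prob}(|T_3|>\epsilon)=O\bigl((P_Z(t(n))\sqrt n)^{-k}\bigr)$. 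Under A3, \ref{Css} gives $P_Z(t(n))\sqrt n\geq n^{1/2-\beta/\gamma_0-\epsilon'}$ with a strictly positive exponent (since $\beta<\gamma_0/2$), so choosing $k$ large enough makes the bound summable in $n$. Adding the three summabilities and invoking Borel--Cantelli completes the proof.
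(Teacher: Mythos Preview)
Your proposal is correct and follows essentially the same approach as the paper: Borel--Cantelli combined with Markov's inequality at a high enough power $k$, using Lemma~\ref{Lemma_Expe} for the deterministic bias, the integral-Minkowski/Burkholder estimate behind \ref{Ref_for_1} for the random integral, and \ref{Cru_for_P_Z} for the random denominator. The only cosmetic difference is that you use the exact identity $\widehat\zeta-\gamma_Z=T_1+T_2+T_3$ (keeping the random factor $P_Z/\widehat p$ attached to the bias term $T_2$), whereas the paper first bounds $|\widehat\zeta-\gamma_Z|$ by a triangle-inequality sum with deterministic denominator $P_Z(t(n))$ and then handles the passage from $P_Z$ to $\widehat p$ in a separate split; the ingredients and the resulting summability thresholds for $k$ are identical.
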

\begin{proof}
From Borel-Cantelli \cite{Probabilty_Stochastic} lemma, it suffices to show that, for $\forall \epsilon>0$,
\begin{equation}
\Sigma_{n=1}^\infty Prob\left(\vert\widehat \zeta-\gamma_Z\vert\geq 4\epsilon\right)<\infty
\label{bc_zeta}
\end{equation}
Since the above term is less than
\begin{equation}
\vert\widehat\zeta-\frac{1}{P_Z(t(n))}\int_{t(n)}^\infty\frac{\widehat p(y)}{y}dy\vert+\vert\frac{1}{P_Z(t(n))}\int_{t(n)}^\infty\frac{\widehat p(y)-P_Z(y)}{y}dy\vert+\vert\frac{1}{P_Z(t(n))}\int_{t(n)}^\infty \frac{P_Z(y)}{y}dy-\gamma_Z\vert
\end{equation}
According to lemma \ref{Lemma_Expe}, for sufficiently large $n$, $\vert\frac{1}{P_Z(t(n))}\int_{t(n)}^\infty\frac{P_Z(y)}{y}dy-\gamma_Z\vert<2\epsilon$. Thus, there exists a constant $n_0$ such that
\begin{equation}
\begin{aligned}
\Sigma_{n=n_0}^\infty Prob\left(\vert\widehat \zeta-\gamma_Z\vert\geq 4\epsilon\right)\\
\leq \Sigma_{n=n_0}^\infty Prob\left(\vert\widehat\zeta-\frac{1}{P_Z(t(n))}\int_{t(n)}^\infty\frac{\widehat p(y)}{y}dy\vert\geq\epsilon\right)+Prob\left(\frac{1}{P_Z(t(n))}\vert\int_{t(n)}^\infty\frac{\widehat p(y)-P_Z(y)}{y}dy\vert\geq \epsilon\right)
\end{aligned}
\label{ft_zeta}
\end{equation}
For the second term, notice for arbitrary $k>1$, from \ref{Ref_for_1}
\begin{equation}
\begin{aligned}
Prob\left(\frac{1}{P_Z(t(n))}\vert\int_{t(n)}^\infty\frac{\widehat p(y)-P_Z(y)}{y}dy\vert\geq \epsilon\right)\leq \frac{1}{\epsilon^k}\left(\frac{1}{P_Z(t(n))}\vert\vert\int_{t(n)}^\infty\frac{\widehat p-P_Z(y)}{y}dy\vert\vert_k\right)^k\\
=O\left(\frac{1}{P_Z^k(t(n))n^{k/2}}\right)
\end{aligned}
\label{Ys}
\end{equation}
Choose $k>\frac{2}{\frac{1}{2}-\frac{\beta}{\gamma_0}}$ we prove the convergence of summation for the second term.

The first term of \ref{ft_zeta} is less than
\begin{equation}
\begin{aligned}
Prob\left(\frac{\int_{t(n)}^\infty\frac{P_Z(y)}{y}dy}{P_Z(t(n))}\vert\frac{P_Z(t(n))\mathbf 1_{\widehat p(t(n))\geq s(n)}}{\widehat p(t(n))}-1\vert\geq \frac{\epsilon}{2}\right)\\
+Prob\left(\frac{1}{P_Z(t(n))}\vert\left(\frac{P_Z(t(n))\mathbf 1_{\widehat p(t(n))\geq s(n)}}{\widehat p(t(n))}-1\right)\left(\int_{t(n)}^\infty \frac{\widehat p(y)-P_Z(y)}{y}dy\right)\vert\geq \frac{\epsilon}{2}\right)
\end{aligned}
\end{equation}
According to lemma \ref{Lemma_Expe}, For large $n$, $\int_{t(n)}^\infty \frac{P_Z(y)}{y}dy/P_Z(t(n))\leq 2\gamma_Z$, so for sufficiently large $n$,
\begin{equation}
\begin{aligned}
Prob\left(\frac{\int_{t(n)}^\infty\frac{P_Z(y)}{y}dy}{P_Z(t(n))}\vert\frac{P_Z(t(n))\mathbf 1_{\widehat p(t(n))\geq s(n)}}{\widehat p(t(n))}-1\vert\geq \frac{\epsilon}{2}\right)\\
\leq Prob\left(\frac{\vert P_Z(t(n))-\widehat p(t(n))\vert}{s(n)}\geq \frac{\epsilon}{4\gamma_Z}\right)+Prob\left(\widehat p(t(n))<s(n)\right)
\end{aligned}
\end{equation}
From \ref{X_s} we know the convergence of summation on this term.

Also, from mean value inequality,
\begin{equation}
\begin{aligned}
Prob\left(\frac{1}{P_Z(t(n))}\vert\left(\frac{P_Z(t(n))\mathbf 1_{\widehat p(t(n))\geq s(n)}}{\widehat p(t(n))}-1\right)\int_{t(n)}^\infty \frac{\widehat p(y)-P_Z(y)}{y}dy\vert\geq \frac{\epsilon}{2}\right)\\
\leq Prob\left(\frac{1}{P_Z^2(t(n))}\vert\int_{t(n)}^\infty\frac{\widehat p(y)-P_Z(y)}{y}dy\vert^2\geq \frac{\epsilon}{2}\right)\\
+ Prob\left(\left(\frac{P_Z(t(n))\mathbf 1_{\widehat p(t(n))\geq s(n)}}{\widehat p(t(n))}-1\right)^2\geq\epsilon/2\right)
\end{aligned}
\end{equation}
From \ref{Ys} and \ref{X_s} we know the convergence of summation. Thus, \ref{bc_zeta} is proved and the almost sure convergence is also proved.
\end{proof}

Finally, we prove the almost sure convergence of $\frac{\widehat \zeta}{\widehat \rho}\mathbf 1_{\widehat \rho\geq H_n}$.
\begin{proof}[Proof for theorem \ref{Theorem_AS}]
According to theorem \ref{As_p_q} and \ref{As_zeta}, since $\gamma_X,\gamma_Y>0$, we have
\begin{equation}
\frac{\widehat \zeta}{\widehat \rho}\to_{a.s.}\gamma_X
\end{equation}
Since $H_n\to 0$ as $n\to\infty$, according to theorem \ref{As_p_q}, there exists a $n_0>0$ such that
\begin{equation}
\begin{aligned}
\Sigma_{n=n_0}^\infty \mathbf 1_{\widehat \rho<H_n}\leq \Sigma_{n=n_0}^\infty \mathbf 1_{\vert\widehat \rho-\frac{\gamma_Y}{\gamma_X+\gamma_Y}\vert>\frac{\gamma_Y}{2(\gamma_X+\gamma_Y)}}<\infty
\end{aligned}
\end{equation}
which means that $\mathbf 1_{\widehat \rho\geq H_n}\to_{a.s.} 1$. Thus, the product of these two terms converges to $\gamma_X$.
\end{proof}
\section{Simulations and numerical examples\label{Simulation}}
In this section, we suppose $X_i$ and $Y_i,\ i=1,2,...n$ obey log gamma distribution, whose density is
\begin{equation}
f_K(x)=C_{f_K}x^{-\alpha_K-1}\log^{\beta_K-1} x,\ K=X,Y,\ x\geq 1,\ \alpha_K>0
\label{loggamma}
\end{equation}
We first prove that this distribution satisfies assumption A2.
\begin{theorem}
Distribution with density \ref{loggamma} satisfies condition A2
\end{theorem}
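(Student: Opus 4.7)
The plan is to exhibit the slow-varying factor $L_K$ explicitly by integrating the density, then bound $xL_K'(x)/L_K(x)$ via a direct expansion. First I would compute the tail function by integration by parts:
\begin{equation}
P_K(x)=\int_x^\infty C_{f_K}y^{-\alpha_K-1}\log^{\beta_K-1}y\,dy=\frac{C_{f_K}}{\alpha_K}x^{-\alpha_K}\log^{\beta_K-1}x+\frac{C_{f_K}(\beta_K-1)}{\alpha_K}\int_x^\infty y^{-\alpha_K-1}\log^{\beta_K-2}y\,dy.
\end{equation}
A second integration by parts (or a direct crude bound, since $\log^{\beta_K-2}y$ varies slowly) shows the remainder integral is $O(x^{-\alpha_K}\log^{\beta_K-2}x)$, giving the asymptotic identity
\begin{equation}
P_K(x)=\frac{C_{f_K}}{\alpha_K}x^{-\alpha_K}\log^{\beta_K-1}x\bigl(1+O(1/\log x)\bigr),\qquad x\to\infty.
\end{equation}
Hence $L_K(x)=x^{\alpha_K}P_K(x)=\frac{C_{f_K}}{\alpha_K}\log^{\beta_K-1}x\,(1+O(1/\log x))$, which is differentiable for $x>1$.

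Next I would compute $L_K'$ by differentiating the identity $L_K(x)=x^{\alpha_K}P_K(x)$ and using $P_K'(x)=-f_K(x)$:
\begin{equation}
xL_K'(x)=\alpha_K x^{\alpha_K}P_K(x)-x^{\alpha_K+1}f_K(x)=\alpha_K L_K(x)-C_{f_K}\log^{\beta_K-1}x.
\end{equation}
Dividing by $L_K(x)$ yields the clean expression
\begin{equation}
\frac{xL_K'(x)}{L_K(x)}=\alpha_K-\frac{C_{f_K}\log^{\beta_K-1}x}{L_K(x)}.
\end{equation}
Substituting the asymptotics obtained above gives $\frac{C_{f_K}\log^{\beta_K-1}x}{L_K(x)}=\alpha_K(1+O(1/\log x))^{-1}=\alpha_K+O(1/\log x)$, so the displayed quantity is $O(1/\log x)$. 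This verifies A2 with $\kappa=1$. The edge case $\beta_K=1$ is trivial, since then $L_K$ is exactly constant and $L_K'\equiv 0$.

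The only real obstacle is the quantitative remainder estimate in the expansion of $P_K(x)$; the rest is algebra. To make that step rigorous with a minimum of fuss I would keep iterating the integration by parts a fixed number of times (each step lowers the exponent of $\log y$ by one and pulls out an $x^{-\alpha_K}\log^{\beta_K-k}x$ factor), stopping as soon as the remainder is provably $O(x^{-\alpha_K}\log^{\beta_K-2}x)$, which is all that is needed for a relative error of order $1/\log x$. Once that estimate is in hand, the conclusion follows immediately from the displayed identity for $xL_K'(x)/L_K(x)$.
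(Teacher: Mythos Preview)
Your argument is correct and follows the same route as the paper: both compute
\[
\frac{xL_K'(x)}{L_K(x)}=\alpha_K-\frac{C_{f_K}\log^{\beta_K-1}x}{L_K(x)}
\]
and invoke the same integration-by-parts identity for $P_K(x)$. The only difference is in how the remainder is handled. You pass through the asymptotic $L_K(x)=\frac{C_{f_K}}{\alpha_K}\log^{\beta_K-1}x\bigl(1+O(1/\log x)\bigr)$, which requires bounding $\int_x^\infty y^{-\alpha_K-1}\log^{\beta_K-2}y\,dy$ by iterated integration by parts (this does work: iterate until the log exponent is negative, then use monotonicity). The paper avoids this detour by writing the ratio directly as
\[
\Bigl|\frac{xL_K'(x)}{L_K(x)}\Bigr|=|\beta_K-1|\,\frac{\int_x^\infty y^{-\alpha_K-1}\log^{\beta_K-2}y\,dy}{\int_x^\infty y^{-\alpha_K-1}\log^{\beta_K-1}y\,dy}
\]
and observing that $\log^{\beta_K-2}y=\log^{\beta_K-1}y/\log y\le \log^{\beta_K-1}y/\log x$ for $y\ge x$, which gives the bound $|\beta_K-1|/\log x$ in a single line with an explicit constant. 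Your approach yields the same $\kappa=1$ conclusion; the paper's comparison trick just gets there with less work.
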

\begin{proof}
Slow varying part of distribution \ref{loggamma} is
\begin{equation}
L_K(x)=C_{f_K}x^{\alpha_K}\int_{x}^\infty y^{-\alpha_K-1}\log^{\beta_K-1}y\ dy
\end{equation}
Since
\begin{equation}
\alpha_K\int_x^\infty y^{-\alpha_K-1}\log^{\beta_K-1}y\ dy=x^{-\alpha_K}\log^{\beta_K-1}x+(\beta_K-1)\int_x^\infty y^{-\alpha_K-1}\log^{\beta_K-2}y \ dy
\end{equation}
We have
\begin{equation}
\begin{aligned}
\vert\frac{xL_K^{'}(x)}{L_K(x)}\vert=\frac{\vert(\alpha_K\int_x^\infty y^{-\alpha_K-1}\log^{\beta_K-1}y\ dy)-x^{-\alpha_K}\log^{\beta_K-1}x \vert}{\int_x^\infty y^{-\alpha_K-1}\log^{\beta_K-1}y\ dy}\\
=\frac{\vert\beta_K-1\vert\int_x^\infty y^{-\alpha_K-1}\log^{\beta_K-2}y\ dy}{\int_x^\infty y^{-\alpha_K-1}\log^{\beta_K-1}y\ dy}
\end{aligned}
\end{equation}
Suppose $h_K(x)=\int_x^\infty y^{-\alpha_K-1}\log^{\beta_K-1}y\ dy$, since $h_K$ is decreasing and $h_K(\infty)=0$, then
\begin{equation}
\frac{\int_x^\infty \frac{-h_K^{'}(y)}{\log y}\ dy}{h_K(x)}\leq \frac{1}{\log x}
\end{equation}
and thus assumption A2 is satisfied with $\kappa=1$.
\end{proof}
Figure \ref{Figure_1} to \ref{Figure_3} demonstrates the performance of estimator $\frac{\widehat\zeta}{\widehat\rho}\mathbf 1_{\widehat\rho\geq H_n}$ under different conditions. Parameters we choose for simulation is listed in table \ref{Simu_Para}, sample size is assumed to be 10000 for case 1-5 and 50000 for case 6. We use relative error
\begin{equation}
\delta=\frac{\vert\frac{\widehat{\zeta}}{\widehat\rho}\mathbf{1}_{\widehat{\rho}\geq H_n}-\gamma_X\vert}{\gamma_X}
\end{equation}
to evaluate finite sample performance of our estimator. We perform 50 times numerical experiments and the error bars in figure \ref{Figure_1} to \ref{Figure_3} show the maximum, minimum and average relative error under different $t(n)$. Following definition \ref{Defff_2}, $t(n)=n^\beta$, $\beta$ coincides with notation Beta in figure \ref{Figure_1}-\ref{Figure_3}. As we can see,

1. If tail index of censor time is less than the underlying data, performance of tail index estimator will be inferior.

2. Choosing suitable $t(n)$ is critical for making tail index estimator reliable. Choosing too small or too big $t(n)$ leads to increase of relative error.

3. For suitable $t(n)$, tail index estimator has good performance even when censor rate is high.
\begin{table}
  \centering
  \caption{Parameter for simulation, definition of $\beta_K,K=X,Y$ see \ref{loggamma}}
  \begin{tabular}{c l l l l l l}
  \hline\hline
  Case & $\gamma_X$ & $\gamma_Y$ & $\beta_X$ & $\beta_Y$ & $\gamma_0$ & Average censor rate\\
    1 &  2.0     &    2.0     &    1.2    &   1.4     &    0.2     &          0.441\\  \hline
    2 &  1.0     &    2.0     &    0.5    &   0.5     &    0.3     &          0.392\\  \hline
    3 & 1.0     &    2.0     &    1.5    &   1.5     &    0.3     &          0.291\\  \hline
    4 & 0.5     &    0.476   &    1.0    &   1.0     &    0.1     &          0.512\\  \hline
    5 & 0.5     &    0.4     &    1.0    &   1.0     &    0.1     &          0.555\\  \hline
    6 & 0.5     &    0.4     &    1.0    &   1.0     &    0.1     &          0.556\\
  \hline\hline
  \end{tabular}
  \label{Simu_Para}
\end{table}
\begin{figure}
\subfigure{
\begin{minipage}{0.5\linewidth}
\includegraphics[width=2.5in]{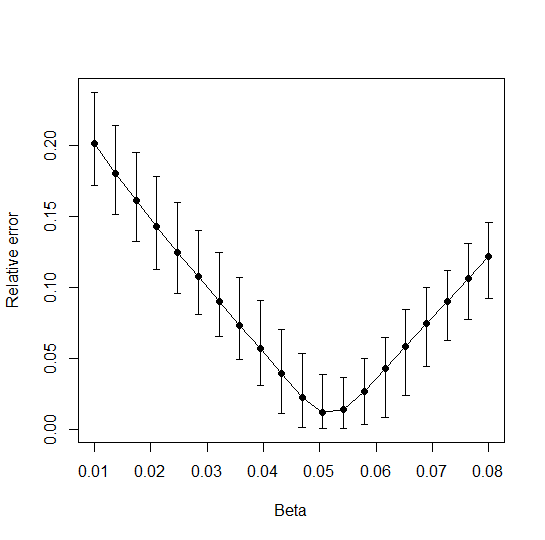}
\end{minipage}
}
\subfigure{
\begin{minipage}{0.5\linewidth}
\includegraphics[width=2.5in]{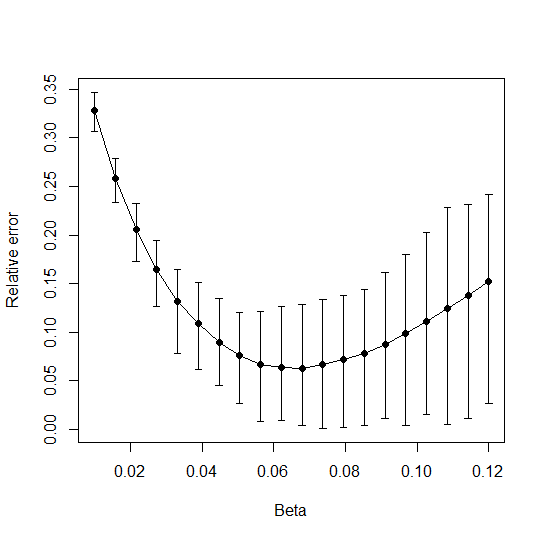}
\end{minipage}
}
\caption{Numerical experiment case 1(left) and 2(right), sample size is 10000, parameters used in these cases coincide with table \ref{Simu_Para}}
\label{Figure_1}
\end{figure}
\begin{figure}
\subfigure{
\begin{minipage}{0.5\linewidth}
\includegraphics[width=2.5in]{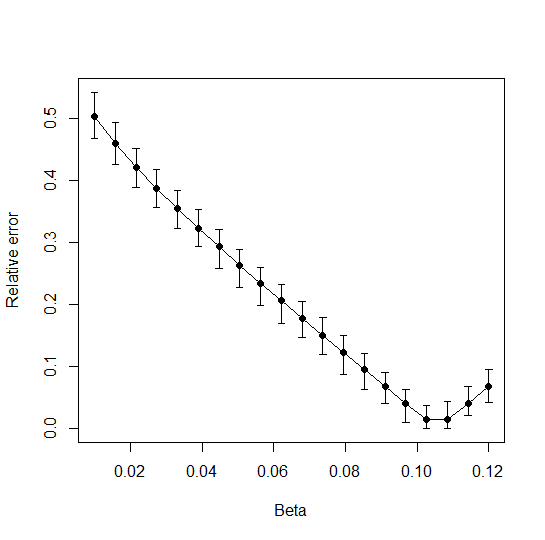}
\phantomcaption
\end{minipage}
}
\subfigure{
\begin{minipage}{0.5\linewidth}
\includegraphics[width=2.5in]{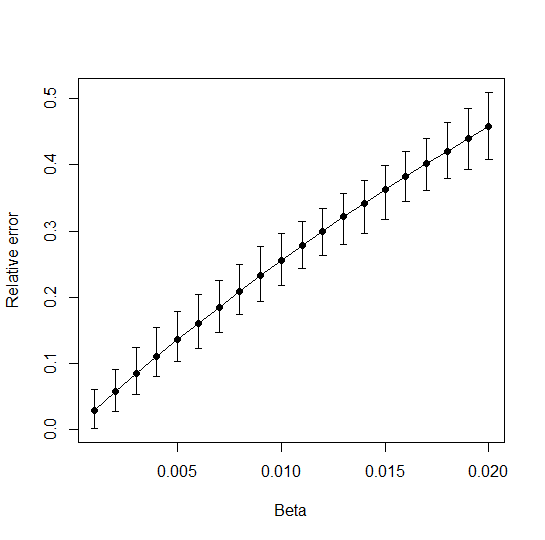}
\phantomcaption
\end{minipage}
}
\addtocounter{figure}{-2}
\caption{Numerical experiment case 3(left) and 4(right), sample size is 10000, parameters used in these cases coincide with table \ref{Simu_Para}}
\label{Figure_2}
\end{figure}
\begin{figure}
\subfigure{
\begin{minipage}{0.5\linewidth}
\includegraphics[width=2.5in]{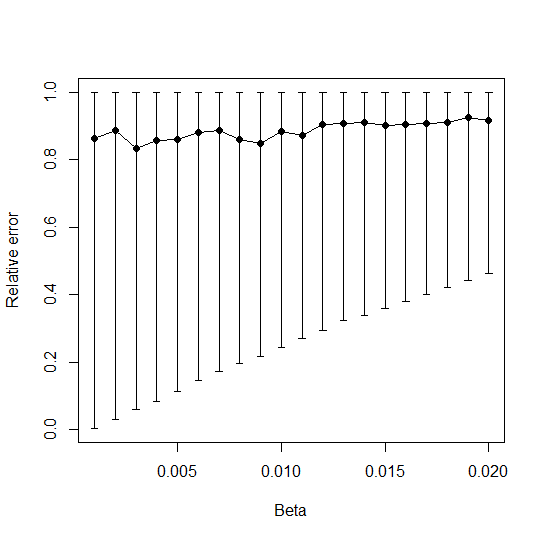}
\phantomcaption
\end{minipage}
}
\subfigure{
\begin{minipage}{0.5\linewidth}
\includegraphics[width=2.5in]{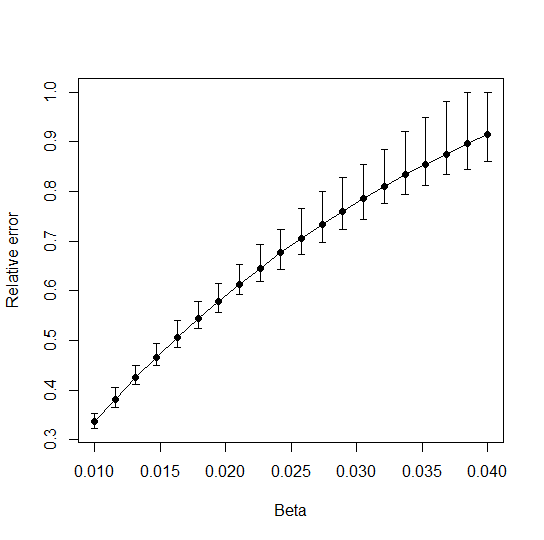}
\phantomcaption
\end{minipage}
}
\addtocounter{figure}{-2}
\caption{Numerical experiment case 5(left) and 6(right), sample size is 10000 for case 5 and 50000 for case 6, parameters used in these cases coincide with table \ref{Simu_Para}}
\label{Figure_3}
\end{figure}
\section{Conclusion\label{Conclusion}}
In this paper, we focus on proving almost sure convergence and $L_p$ convergence of estimator provided by Grama and Spokoiny \cite{grama2008} under random censoring and condition A1-A3. We also perform numerical experiments with data satisfying log gamma distribution. Numerical results demonstrate the usefulness of our tail index estimator when sample size is finite.
\bibliographystyle{unsrt}
\bibliography{Ref_for_Tail_Index}
\end{document}